\theoremstyle{plain}
\newtheorem{thm}{Theorem}
\newtheorem{lem}[thm]{Lemma}
\newtheorem{prop}[thm]{Proposition}
\newtheorem{nota}[thm]{Notation}
\newtheorem{rem}[thm]{Remark}
\newtheorem{defin}[thm]{Definition}
\begin{document}

\title{Treelike quintet systems}
\author{Simone Calamai \  \ \  \ \ Elena Rubei}

\date{\today}
\maketitle

\def\thefootnote{}
\footnotetext{ \hspace*{-0.5cm}
{\bf 2010 Mathematical Subject Classification:}  05C05 

{\bf Key words:} supertrees, quintets }

\begin{abstract} Let $X$ be a finite set.
We give criterion to say if
 a system of trees ${\cal P}=\{T_i\}_i$ with  leaf sets  $L(T_i) \in {X
\choose 5}$ 
can be  amalgamated into a supertree, that is, if there exists a tree $T$ with
$L(T)=X$  such that $T$ restricted to $ L(T_i)$ is equal to $T_i$.  

\end{abstract}

\section{Introduction}

Phylogenetic trees  are used to represent evolutionary relationships
among some taxa in many fields, such as biology and philology. Unfortunately, 
methods to reconstruct phylogenetic trees generally do not work for large
numbers of taxa; so it would be useful  to have a criterion to say if, given a
collection of 
phylogenetic trees with overlapping sets of taxa, there exists a (super)tree
``including'' all the trees in the given collection. In this case we say that
the collection is ``compatible''.
The literature on the ``supertree problem'' is wide. We quote only few of the
known results.

One of the first results is due to Colonius and Schultze: in  \cite{C-S} they
gave a criterion to say if, given a finite set $X$, a system of trees ${\cal P}=
\{T_i\}_i$ with leaf sets $L(T_i) \in {X \choose 4}$ 
can be  amalgamated into a supertree, that is, if there exists a tree $T$ with
$L(T)=X$  such that $T$ restricted to ${L(T_i)}$ is equal to $T_i$.  Obviously,
a tree with leaf set 
$\{a,b,c,d\}$ is determined by the partition (called ``quartet'')  of
$\{a,b,c,d\}$ into the cherries; Colonius and Schultze defined three properties,
thinness, transitivity and saturation, that are necessary and sufficient for a
quartet system to be treelike.

In \cite{BBDS} the authors suggest a polynomial time algorithm that, 
given a collection of trees,  produces a supertree, if it exists and 
under some conditions. 

We quote also the papers \cite{BS} and \cite{GSS}, where the authors studied 
closure rules among compatible trees, i.e. rules that, given a compatible 
collection of trees, determine some other trees not in the original collection.

Finally, in 2012, Gr\"{u}newald 
gave a sufficient criterion for a set of binary phylogenetic trees to be
compatible; precisely he proved that, if ${\cal P}$ is a finite collection of
phylogenetic trees and the cardinality of the union of the leaf sets of the 
elements of ${\cal P}$ minus $3$ is equal to the sum of the  cardinalities 
of the set of the interior edges of the  elements of ${\cal P}$, then ${\cal
P}$ 
is compatible (see \cite{Gru}).

A possible variant of the supertree problem is to fix the cardinality of the
leaf sets of the trees in the given collection. 
In this paper we consider this problem in the case the cardinality of the leaf
set
of every tree in the given collection is $5$. 
Obviously a tree with the cardinality of the leaf set equal to $5$ is given by a
 partition (called ``quintet'')  of the leaf set  into the cherries and  the
complementary of the union of the cherries. 
We define three properties, analogous to the ones for quartets, that are
necessary and sufficient for a quintet system to be treelike.


\section{Notation and recalls}

\begin{defin}
$\bullet $ Let $Y$ be a set. A partition of $Y$ into $k$ subsets
of cardinality $n_1, \ldots ,n_k$, with $n_1 \geq \ldots \geq n_k$, is said a partition of kind $(n_1, \ldots, n_k)$.

$\bullet$ 
Let $X$ be set.

A partition of a $4$-subset $Y$ of $X$ is said a {\bf quartet}  (on $Y$) in $X$
if its kind is one of the following: $(2,2)$,  $(4)$.

A partition of a $5$-subset $Y$ of $X$ is said a {\bf quintet}  (on $Y$) in $X$
if its kind is one of the following: $(2,2,1)$, $(3,2)$, $(5)$.
 
\end{defin}

\begin{nota}
Throughout the paper, $X$ will denote a finite nonempty set.
\end{nota}

\begin{defin} 
  
 Let $T$ be a tree.

 $\bullet$ We denote by $L(T)$ the leaf set of $T$.
  
 $\bullet$ For any  $S \subset L(T)$, we denote by $T|_S$ the minimal subtree 
of $T$ whose vertex set   contains $S$.

$\bullet$ 
We say that two leaves  $i$ and $j$ of $T$ are  neighbours
if in the path from $i$ to $j$ there is only one vertex of  degree  greater than
$2$; 
furthermore, we say that  $C \subset L(T)$ is a {\bf cherry} if any $i,j \in C$
are neighbours.

$\bullet $ We say that a cherry  is {\bf complete}
 if it is not strictly  contained in another cherry.
\end{defin}

\begin{defin}
A {\bf phylogenetic $X$-tree} $(T ,\varphi ) $ is a finite tree $T$ without
vertices of degree $2$ and endowed with a bijective function 
$\varphi: X \rightarrow L(T) $.

{\bf The quartet system $S$ in $X$  associated to a phylogenetic $X$-tree} is
the 
quartet system defined as follows: for any $a,b,c,d \in X$,

$(a,b \, |\, c,d) \in S$ if and only if $\{a,b\}$ and $\{c,d\}$ are complete
cherries of $T|_{\{a,b,c,d\}}$,

$(a,b , c,d) \in S$ if and only if $T|_{\{a,b,c,d\}}$  is a star tree.

{\bf The quintet system $S$ in $X$  associated to a phylogenetic $X$-tree} is
the 
quintet system defined as follows: for any $a,b,c,d,e \in X$,

$(a,b \, |\, c,d\,|\, e) \in S$ if and only if
 $\{a,b\}$ and $\{c,d\}$ are complete cherries of $T|_{\{a,b,c,d,e\}}$,

$(a,b \, |\, c,d, e) \in S$ if and only if
 $\{a,b\}$ and $\{c,d,e\}$ are complete cherries of $T|_{\{a,b,c,d,e\}}$,

$(a,b , c,d,e) \in S$ if and only if $T|_{\{a,b,c,d,e\}}$  is a star tree.

Given a quartet system (respectively  a quintet system) $S$ in $X$ and 
 a quartet (respectively a quintet) in  $X$, we  often write either simply ``$P$'' or ``$P$ holds''
instead of writing ``$ P \in S$'' when it   is clear from the context the system
which we are referring to.

\end{defin}

\begin{defin} Let $S$ be a  quartet system  in $X$.

$\bullet$ 
We say that $S$ is {\bf saturated} if the following implication holds
for any $a_1, a_2, b_1, b_2, x \in X$:

$(a_1, a_2 \, | \, b_1, b_2) \Rightarrow (a_1, x \,|\, b_1, b_2 ) \vee (a_1, a_2
\,|\, b_1, x )$.

$\bullet$ 
We say that $S$ is {\bf transitive} if the following implication holds
for any $a_1, a_2, b_1, b_2, x \in X$:

$(a_1, x \, | \, b_1, b_2)  \wedge (a_2, x \, | \, b_1, b_2) \Rightarrow (a_1,
a_2 \,|\, b_1, b_2 ) $.

$\bullet $ 
We say that $S$ is {\bf thin} if, 
for any $4$-subset $Y$ of $X$, there exists only one quartet on $Y$ in $S$.
\end{defin}

As we have already said in the introduction, 
Colonius and Schultze characterized treelike quartet systems. The statement
we recall here is the one in  \cite{Dresslibro}.

\begin{thm} \label{CS} 
 Let $S$ be a quartet system in $X$; we have that  $S$ is the quartet system of
a phylogenetic  $X$-tree
if and only if $S$ is thin, transitive and saturated.
\end{thm}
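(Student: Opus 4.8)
The plan is to prove the two implications separately, treating the ``only if'' direction as a routine verification and devoting the bulk of the work to the ``if'' direction. For \emph{necessity}, suppose $S = S(T)$ for a phylogenetic $X$-tree $(T,\varphi)$. Thinness is immediate: for a $4$-subset $Y$ the induced subtree $T|_Y$ is a tree on four leaves, hence either a star (yielding the single quartet $(a,b,c,d)$) or a resolved tree with exactly two complete cherries determining a unique split $(a,b\,|\,c,d)$; in either case exactly one quartet on $Y$ lies in $S$. For transitivity and saturation I would argue directly on induced five-leaf subtrees: given the hypotheses, I consider $T|_{\{a_1,a_2,b_1,b_2,x\}}$ and run through the finitely many possible shapes of this tree (equivalently, the possible positions of $x$ relative to the quartet tree on $\{a_1,a_2,b_1,b_2\}$), checking that each case forces the stated conclusion. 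This is a finite case analysis with no real obstacle.

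For \emph{sufficiency}, assume $S$ is thin, transitive and saturated and construct a tree $T$ by induction on $|X|$. The base cases $|X|\le 3$ are trivial (there are no quartets, and a star serves), while for $|X|=4$ the unique quartet furnished by thinness is itself the desired tree. For the inductive step I pick some $z\in X$ and set $X' = X\setminus\{z\}$. The restriction $S' = \{P\in S : P \text{ involves only elements of } X'\}$ is again thin, transitive and saturated, since these three properties are stated as implications universally quantified over elements of $X$ and restriction merely shrinks the range of the quantifiers while leaving the quartets on $4$-subsets of $X'$ untouched. By the inductive hypothesis there is a phylogenetic $X'$-tree $T'$ with $S(T')=S'$, and it remains to attach $z$ to $T'$ so as to recover all quartets of $S$ involving $z$.

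The crux is the attachment of $z$. The key observation is that the quartets of $S$ involving $z$ assign to each triple $\{a,b,c\}\subseteq X'$ a \emph{direction}: by thinness exactly one of $(z,a\,|\,b,c)$, $(z,b\,|\,a,c)$, $(z,c\,|\,a,b)$, $(z,a,b,c)$ holds, and this records on which of the three branches emanating from the median vertex $m(a,b,c)$ of the tripod $T'|_{\{a,b,c\}}$ the leaf $z$ ought to sit (the star case placing it at $m(a,b,c)$ itself). The heart of the argument is then to prove that these local directions are \emph{globally consistent}, i.e.\ that there is a single point $v$ of $T'$, interior to an edge or at a vertex, lying on the prescribed branch for every triple; attaching $z$ at $v$, subdividing the relevant edge if necessary, produces the candidate tree $T$. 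This is precisely where transitivity and saturation enter in an essential way: transitivity guarantees that the ``lies on the same side'' information is coherent across overlapping triples, while saturation rules out conflicting demands that would force $z$ onto incompatible branches. I expect this consistency statement, showing that the family of subtrees of $T'$ prescribed by the directions has a single common point, to be the main obstacle.

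Finally, once $v$ and $T$ are defined I would verify $S(T)=S$. Quartets on $4$-subsets of $X'$ are reproduced correctly because $T|_{X'}=T'$ and $S(T')=S'$; each quartet involving $z$ is correct by the very construction of $v$; and thinness of $S$ ensures there are no extra relations to account for on either side. This identifies $S$ with the quartet system of a phylogenetic $X$-tree, completing the induction.
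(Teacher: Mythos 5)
You should first be aware that the paper does not prove Theorem~\ref{CS} at all: it is the Colonius--Schultze characterization, quoted in the form given in the book of Dress, Huber, Koolen, Moulton and Spillner, and used as a black box. So there is no ``paper proof'' to match; your proposal has to be judged on its own merits as a proof of a known, nontrivial theorem.

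Judged that way, it has a genuine gap, and you in fact name it yourself. The necessity direction and the inductive scaffolding are fine: restriction of a thin, transitive, saturated system to $X'=X\setminus\{z\}$ is again thin, transitive and saturated (the axioms are universally quantified implications, so they survive shrinking the ground set), and the base cases are trivial. The problem is the attachment step. You correctly observe that each triple $\{a,b,c\}\subseteq X'$ prescribes, via the unique quartet on $\{z,a,b,c\}$, a region of $T'$ (a subtree: a component of $T'$ minus the path from $b$ to $c$, or the single median point in the star case) where $z$ must be attached, and that one must show all these regions have a common point. But you then write that you ``expect this consistency statement \ldots\ to be the main obstacle'' --- that is, the entire mathematical content of the theorem is deferred. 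The reduction of global consistency to pairwise consistency is at least standard (subtrees of a tree have the Helly property), but pairwise consistency itself is exactly where thinness, transitivity and saturation must be deployed in a careful case analysis over the possible quartets on subsets of $\{z\}\cup t_1\cup t_2$ for two triples $t_1,t_2$ (up to seven points, while the axioms speak of five at a time, so one must organize the analysis, e.g.\ through triples sharing two elements). Nothing in your text indicates how that analysis goes, or even that the prescribed regions for two overlapping triples meet; asserting that ``transitivity guarantees coherence'' and ``saturation rules out conflicts'' is a restatement of the goal, not an argument. As it stands, the proposal is a sound plan for a proof, of a shape that does appear in the literature, but it is not a proof.
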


\begin{nota} Let $a_1, a_2 ,  b_1, b_2 , b_3 \in X$ and
let $Q$ be a  quintet system in $X$.

We write $(a_1, a_2 \,|\, \overline{ b_1, b_2, b_3})$ instead of 
  $$ (a_1, a_2 \,|\,  b_1, b_2, b_3) \vee (a_1, a_2 \,|\,  b_1, b_2 \,|\, b_3) 
   \vee (a_1, a_2 \,|\,  b_1, b_3 \,|\, b_2) 
   \vee (a_1, a_2 \,|\,  b_2, b_3 \,|\, b_1) 
  $$
  
\end{nota}

\begin{defin} Let $Q$ be a  quintet system in $X$.

$\bullet$ 
We say that $Q$ is {\bf saturated} if the following implications hold
for any $a_i, b_j, c, x \in X$:

\smallskip

(i) $(a_1, a_2 \,|\, b_1, b_2 \,|\,c)  \Rightarrow $  
$(a_1, a_2 \,|\, b_1, b_2 \,|\,x)  \vee (a_1, x \,|\, b_1, b_2 \,|\,c)  \vee
(a_1, a_2 \,|\, b_1, x \,|\,c) $,

\smallskip

(ii) $(a_1, a_2 \,|\, b_1, b_2 , b_3 )  \Rightarrow (a_1, x \,|\, b_1, b_2 ,
b_3)  \vee (a_1, a_2 \,|\, \overline{ b_1, b_2, x})$

\smallskip

(iii)  $(a_1, a_2 , a_3, a_4 , a_5 )  \Rightarrow $  

$(a_1, a_2 , a_3, a_4 , x) 
 \vee (a_1, x \,|\, a_2, a_3 , a_4 )  
\vee (a_2, x \,|\, a_1, a_3 , a_4 )  
\vee (a_3, x \,|\, a_1, a_2 , a_4 )  
\vee (a_4, x \,|\, a_1, a_2 , a_3 )  $

$\bullet$
We say that $Q$ is {\bf transitive} if the following implications hold
for any $a_i, b_j, c_k, x \in X$:

(i)  $(a_1, a_2 \, |\, b_1, x \,| \, c_1) \wedge  (a_1, a_2 \, |\, b_1, x \,| \,
c_2)  \Rightarrow  (a_1, a_2 \, |\, \overline{ c_1, c_2 , b_1})$, 

(ii) $(a_1, a_2 \, |\, b_1, x \,| \, c_1) \wedge  (a_1, a_2 \, |\, b_2, x \,| \,
c_1)  \Rightarrow $ $(a_1, a_2 \, |\, b_1, b_2 \,| \, c_1) $

(iii) $(a_1, x \, |\, b_1, b_2 , b_3) \wedge  (a_2, x \, |\, b_1, b_2 , b_3) 
\Rightarrow $ $(a_1, a_2 \, |\, b_1, b_2 , b_3) $

(iv) $(a_1, a_2 \, |\, b_1, b_3 , x) \wedge  (a_1, a_2 \, |\, b_2, b_3, x) 
\Rightarrow $ $(a_1, a_2 \, |\, b_1, b_2 , b_3) \vee  (a_1, a_2 \, |\, b_1 , b_2
\,|\, b_3)  $

(v) $(a_1, a_2 \, |\, b_1,x,  b_2 ) \wedge  (a_1, a_2 \, |\, b_1, x \,|\, b_3) 
\Rightarrow $ $(a_1, a_2 \, |\, b_1, b_2 \,|\, b_3)$

(vi) $(a_1, a_2 \, |\, b_1, b_2 \,|\, x)  \wedge 
(a_1, a_2 \, |\, b_1,  b_3,x )    \Rightarrow $ $(a_1, a_2 \, |\, b_1, b_2 \,|\,
b_3)$

$\bullet $ 
We say that $Q$ is {\bf thin} if, 
for any $5$-subset $Y$ of $X$, there exists only one quintet on $Y$ in $Q$ and,
for any $a,b,c,d,x,y \in X$,

(i) $(a,b \,| \,c,x \,|\, d) \wedge (a,c\,| \,b,y \,| \, d)$ is impossible, 

(ii)  $(a,b \,| \,c, d,x) \wedge (a,y \,| \,b, c,d)$ is impossible, 

(iii) $(a,b \,| \,c,x \,|\, d) \wedge (a,c,d \,| \,b,y)$ is impossible,

(iv)  $(a,x \,| \,b,c, d) \wedge (a,d \,| \,b,c \,|\, y)$ is impossible.

\end{defin}

Both for quartet systems and quintet systems, we will write TTS instead of thin, transitive and saturated.

\section{Characterization of treelike quintet systems}

Our aim is the prove that a quintet system is treelike if and only if 
it is TTS.

First of all, we need to define the quartet system associated to a quintet system
and the quintet system associated to a quartet system.

\begin{defin}\label{defi:quartet S coming from quintet Q}
 Given a TTS quintet system $Q$ on  $X$,
 let $S$ be the quartet  system  defined as follows: for any $a,b,c,d\in X$,
we have that 
 $(a,b\, |\,c,d) \in S$ if and only if there exists $y\in X$ for which at least one of
the following instances
 occurs:
 
 $ (i) \; (a,b\, |\,c,d\, |\,y) \in Q$,
 
 $ (ii) \; (a,b\, |\,c,d,y) \in Q$,
 
 $(iii) \; (a,b\, |\,c,y\, |\,d) \in Q $,
 
 $(iv) \; (a,b\, |\,d,y\, |\,c) \in Q$,
 
 $(v) \; (c,d\, |\,b,y\, |\,a) \in Q$,
 
 $(vi) \; (c,d\, |\,a,y\, |\,b) \in Q$,
 
 $(vii) \; (a,b,y\, |\,c,d) \in Q$.
 
We say that $S$ is {\bf the quartet system associated to the quintet system $Q$}.
\end{defin}

\begin{defin}
Let $S$ be a TTS quartet system in $X$. 
Let $Q'$ be the quintet system in $X$ defined as follows:

$(a, b\, |\, c,d \,| \, e ) \in Q' $ if and only if $(a, b\, |\, c,d  ) , 
(a, b\, |\, c,e  ), 
(a, e\, |\, c,d  ) \in S $

$(a, b\, |\, c,d , e ) \in Q' $ if and only if $(a, b\, |\, c,d  ) , 
(a, b\, |\, c,e  ), 
(a, e, c,d  ), (b, c,d  ,e) \ \in S $

$(a_1, a_2, a_3,a_4 , a_5 ) \in Q' $ if and only if $(a_1,...., \hat{a}_i,.... ,
a_5 )  \in S $ for any $ i \in \{1,...., 5\}$.

We say that $Q'$ is {\bf the quintet system associated to the quartet system}
$S$. 
\end{defin}

The sketch of the proof of our result is the following: given a TTS quintet
system $Q$, we will show that the associated quartet system $S$ is TTS;
so there exists a phylogenetic $X$-tree $(T,\varphi)$ inducing $S$. 
We will show that the quintet system associated to $(T, \varphi)$ 
is exactly $Q$ and this will end the proof.

First, we have 
to state two lemmas which will be useful in the remainder of the paper.

\begin{lem}\label{lem: unione di lemmi}
 Let $Q$ be a TTS  quintet system  in $X$. For any  $a,b,c,d,x,y \in X$, it is
not possible the simultaneous  occurrence of $(A) \; (a,b \, | \, c,x \, |\, d
)$ and any of the following:

$(B) \; (a, c \,|\, b, d \, |\, y)$,

$(C) \; (a , c \, | \, d , y \, |\, b)$,

$(D) \; (b , d \, | \, c , y \, |\, a)$,

$(E) \; (y , c \, |\, a ,b , d)$,

$(F) \; (a , c \, |\, b ,d, y)$,

$(G) \; (a , d \, |\, b ,c , y)$,

$(H) \; (d , y \, |\, a ,b, c)$.
\end{lem}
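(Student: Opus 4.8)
The plan is to derive a contradiction in each of the seven cases by working inside the single $5$-element set on which these quintets live, using thinness to rule out coexistence of distinct quintets on the same $5$-subset and invoking the transitivity and saturation axioms to force a forbidden configuration. The key preliminary observation is that case $(A)$, $(a,b\,|\,c,x\,|\,d)$, is a quintet on the set $\{a,b,c,d,x\}$, whereas each of $(B)$--$(H)$ involves a further element $y$ and hence lives on a different $5$-subset (except where it can be reinterpreted). So the proof is \emph{not} a direct appeal to thinness on one set; instead the idea is to combine $(A)$ with a second quintet on $\{a,b,c,d,x\}$ that the axioms produce, and then show that this second quintet clashes with the $5$-subset on which the hypothesis quintet lives.

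Concretely, for each of $(B)$ through $(H)$ I would first use the appropriate transitivity or saturation axiom applied to the hypothesized quintet (the one involving $y$) to deduce a quintet supported on $\{a,b,c,d,x\}$, i.e.\ on the same set as $(A)$. For instance, in case $(B)$, from $(a,c\,|\,b,d\,|\,y)$ I would apply saturation (i) with $x$ playing the role of the fresh element to produce a disjunction of quintets on $\{a,c,b,d,x\}=\{a,b,c,d,x\}$; each disjunct is then compared against $(A)=(a,b\,|\,c,x\,|\,d)$. By thinness there is only one quintet on this set, so each disjunct must \emph{equal} $(A)$ as a partition, and I would check by inspecting the cherry structure that none of the disjuncts can coincide with $(a,b\,|\,c,x\,|\,d)$: in $(A)$ the pair $\{a,b\}$ is a cherry and $\{c,x\}$ is a cherry, while the disjuncts coming from $(B)$ keep $\{a,c\}$ or $\{b,d\}$ together, which is incompatible. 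The remaining cases $(C)$--$(H)$ follow the same template, but the axiom to invoke changes: for the cases where the $y$-quintet has kind $(3,2)$ (namely $(E),(F),(G),(H)$) I would use saturation (ii) or (iii), and for the $(2,2,1)$ cases $(C),(D)$ I would again use saturation (i), always choosing the substitution element so as to land on $\{a,b,c,d,x\}$.

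A second, cleaner route for several cases is to use the four thinness clauses (i)--(iv) directly. Indeed the statement of the lemma is essentially a strengthening of those clauses: thinness (i) already forbids $(a,b\,|\,c,x\,|\,d)\wedge(a,c\,|\,b,y\,|\,d)$, which is structurally parallel to the pair $(A)\wedge(B)$ after relabelling, and thinness (ii)--(iv) cover the mixed $(2,2,1)$/$(3,2)$ clashes. So I expect that cases $(B),(E),(F),(H)$ reduce almost immediately to the four thinness impossibilities after a suitable renaming of the roles of $\{a,b,c,d\}$, while $(C),(D),(G)$ require one application of a transitivity axiom to first convert the $y$-quintet into a $d$- or $x$-quintet that then matches a thinness clause.

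The main obstacle I anticipate is bookkeeping: each quintet is an unordered partition, so every cherry may be listed in either order and the two cherries in a $(2,2,1)$ partition may be swapped, which multiplies the number of syntactic forms one must match against the thinness clauses. The real content is to verify, for each of $(C),(D),(G)$, that exactly one transitivity axiom applies and yields a quintet whose partition type is determined, so that a thinness clause then closes the case; getting the substitution element and the axiom instance right (rather than producing a vacuous disjunction) is the delicate point. I would therefore organise the proof as seven short paragraphs, in each naming the axiom used, the substitution, the resulting quintet on $\{a,b,c,d,x\}$, and the thinness clause or cherry-structure contradiction that finishes it.
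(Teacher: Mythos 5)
There is a genuine gap, and it sits at the heart of your plan. You claim that applying saturation (i) to $(B)=(a,c\,|\,b,d\,|\,y)$ with fresh element $x$ "produces a disjunction of quintets on $\{a,b,c,d,x\}$". It does not: saturation gives
$(a,c\,|\,b,d\,|\,x)\;\vee\;(a,x\,|\,b,d\,|\,y)\;\vee\;(a,c\,|\,b,x\,|\,y)$,
and only the first disjunct lives on $\{a,b,c,d,x\}$; the other two still contain $y$ and are supported on $\{a,b,d,x,y\}$ and $\{a,b,c,x,y\}$ respectively. So the step "each disjunct must equal $(A)$ as a partition by thinness" is only available for one of the three disjuncts, and your argument has nothing to say about the remaining two. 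This is exactly the difficulty the paper's proof is built to overcome: it saturates \emph{both} hypotheses --- $(A)$ with the new element $y$, obtaining
$(A.1)\;(a,y\,|\,c,x\,|\,d)\wedge(y,b\,|\,c,x\,|\,d)$, $(A.2)\;(a,b\,|\,c,y\,|\,d)\wedge(a,b\,|\,x,y\,|\,d)$, $(A.3)\;(a,b\,|\,c,x\,|\,y)$ --- and the other quintet with $x$, and then runs a two-sided cross-case analysis in which the $y$-containing disjuncts from one side clash, on common $5$-subsets, with the $x$- or $y$-containing disjuncts from the other side; the residual combinations (e.g.\ $(A.3)$ against $(B.1)$) are killed by first using transitivity on $(A)\wedge(A.3)$ to derive $(a,b\,|\,\overline{d,x,y})$. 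Your one-sided plan never generates any quintet from $(A)$ on a set containing $y$, so it cannot close these cases.

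Your proposed "cleaner route" is also unsound. None of the pairs $(A)\wedge(B),\dots,(A)\wedge(H)$ is a relabelled instance of the thinness clauses (i)--(iv): a relabelling that turns, say, $(a,c\,|\,b,y\,|\,d)$ from clause (i) into $(B)=(a,c\,|\,b,d\,|\,y)$ must swap $d$ and $y$, but then it sends $(A)=(a,b\,|\,c,x\,|\,d)$ to $(a,b\,|\,c,x\,|\,y)$, which is no longer the hypothesis. The lemma is precisely a strengthening of those four clauses to new configurations, which is why the paper needs the full saturation-plus-thinness-plus-transitivity machinery rather than a renaming.
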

\begin{proof}
 As $Q$ is saturated, $(A)$ implies at least one of the following cases:
 
$(A.1)  \; (a,y \, |\, c,x \,| \, d)  \;\wedge \;(y,b \, |\, c,x \,| \, d),  $  
  
$(A.2)  \; (a,b \, |\, c,y \,| \, d) \;\wedge \;(a,b \, |\, x,y \,| \, d),$ 
  
$(A.3)  \; (a,b \, |\, c,x \,| \, y)$.

We claim that $(A)\wedge(B)$ cannot hold. As $Q$ is saturated,
$(B)$ implies at least one of the following:

$(B.1)  \; (a,x \, |\, b,d \,| \, y)  \;\wedge \;(x,c \, |\, b,d \,| \, y),  $

$(B.2)  \; (a,c \, |\, x , d \,| \, y)\;\wedge \;(a,c \, |\, b,x \,| \, y),$ 
  
$(B.3)  \; (a,c \, |\, b,d \,| \, x)$.

Since $Q$ is thin, each of $(A)\wedge(B.3)$,
$(A.1)\wedge (B.1)$, $(A.1)\wedge (B.2)$,  $(A.2)\wedge (B)$, $(A.3)\wedge (B.2)$ is impossible. As $Q$ is
transitive, $(A)\wedge(A.3)$ implies
$(a, b \, | \, \overline{d, x , y})$ which contradicts $(B.1)$; thus the claim is proved.

We now show that $(A)\wedge (C)$ cannot hold. As $Q$ is saturated, $(C)$ implies at least one of the following:

$(C.1) \; (a, x \, | \, d , y \, | \, b) \; \wedge \;(x, c \, | \, d , y \, |
\,
b) ,$

$(C.2) \; (a, c \, | \, d , x \, | \, b)$,

$(C.3) \; (a, c \, | \, d , y \, | \, x)$.

Since $Q$ is thin, each of  $(A)\wedge (C.2)$,
$(A.1)\wedge (C.1)$, $(A.1)\wedge (C.3)$,  $(A.2)\wedge(C)$ is impossible.
As $Q$ is transitive, $(A)\wedge (A.3)$ implies $ (a,b \, |\, \overline{c, d,y}
)$, which contradicts $(C)$, yielding the claim. 

We prove that $(A)\wedge (D)$ cannot hold. As $Q$ is saturated, $(D)$ implies at least one of the following:

$(D.1) \; (x, d \, | \, c , y \, | \, a) \; \wedge \; (b, x \, | \, c , y \, |
\, a)$,

$(D.2) \; (b, d \, | \, c , x \, | \, a)$,

$(D.3) \; (b, d \, | \, c , y \, | \, x) $.

Since $Q$ is thin, it is impossible to have each of $(A)\wedge(D.2)$,
$(A.1)\wedge(D.1)$, $(A.1)\wedge (D.3)$, $(A.2)\wedge (D)$. As $Q$ is
transitive, $(A)\wedge(A.3)$ implies $(a,b \,
|\,
\overline{c, d,y} )$, which contradicts $(D)$, yielding the claim.

We claim that $(A)\wedge(E) $ is impossible.  As $Q$ is saturated, $(E)$ implies

$   (x, c \, | \, a , b , d)\; (E.1) \quad \vee \quad [(y , c\, | \, \overline{
a , b , x}) \: (E.2) \; \wedge \; (y , c \, | \, \overline{ a ,d , x}) \:
(E'.2)] $.

The thinness of $Q$ excludes each of  $(A)\wedge
(E.1)$, $(A.1)\wedge (E'.2)$, $(A.2)\wedge (E)$,  $(A.3)\wedge
(E.2)$, yielding the claim.

We claim that $(A)\wedge (F)$ cannot hold. As $Q$ is saturated, $(F)$ implies at least  one of the following:

$(F.1)\; (a,x \, |\,b,d, y) \; \wedge \; (c,x \, |\, b,d,y)  $, 

$(F.2)\; (a,c \, |\, \overline{b,d,x}) $.

As $Q$ is thin, each of $(A)\wedge(F.2)$, $(A.1)\wedge(F.1)$, 
$(A.2)\wedge(F.1)$ is impossible. Moreover, since $Q$ is transitive,  $(A.3)\wedge(A)$
implies $(a,b \, |\,
\overline{d,x,y}) $, which contradicts $(F.1)$ by the thinness of $Q$, so the
claim follows.

We claim that $(A)\wedge (G)$ cannot hold. As $Q$ is saturated, $(G)$ implies at least one of the following:

$(G.1)\; (a, x \, | \, b ,c ,y)\; \wedge\;  (d, x \, | \, b ,c ,y)$,

$(G.2)\; (a , d \, | \, \overline{b, c, x})$.

By the thinness of $Q$, each of $(A)\wedge(G.2)$, $(A.1)\wedge
(G.1)$, $(A.2)\wedge (G)$,  $(A.3)\wedge (G.1)$ is impossible, so we get the claim.

We claim that $(A)\wedge(H)$ cannot hold. As $Q$ is saturated, $(H)$ implies at least  one of the following:

$(H.1)\; (d, x \, | \, a, b ,c)$,

$(H.2)\; (d, y \, | \, \overline{a, c, x})$.

By the thinness of $Q$, each of $(A)\wedge(H.1)$,
$(A.1)\wedge(H.2)$,  $(A.2)\wedge (H)$ is impossible. Moreover, since $Q$ is transitive, 
$(A.3)\wedge(A)$
implies $(a,b \, |\,
\overline{d,c,y}) $, which contradicts $(H)$ by the thinness of $Q$, so the
claim follows.
\end{proof}

\begin{lem}\label{lem: osservazione nuova}
 Let $Q$ be a TTS  quintet system  in $X$. For any  $a,b,c,d,x,y \in X$, it is
not possible the simultaneous occurence of 
 $  (A) \;\;\; (a, c \, | \, b , d , y )   $ and any of the following:

$(B) \;(b , x \, | \, a ,c ,d)$, 

$(C) \; (a,b \, |\, c,d \,| \, x)  $.
\end{lem}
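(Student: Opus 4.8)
The plan is to reduce both impossibilities to the thinness of $Q$ (and, in one case, to the already-proven Lemma~\ref{lem: unione di lemmi}), after using saturation to move the relevant quintets onto a common $5$-subset. The intuition is that $(A)=(a,c\,|\,b,d,y)$ records that on $\{a,b,c,d\}$ the pair $\{a,c\}$ is a cherry, while $(B)=(b,x\,|\,a,c,d)$ records that there $a,c,d$ cluster together with $b$ apart, and $(C)=(a,b\,|\,c,d\,|\,x)$ records that $\{a,b\}$ and $\{c,d\}$ are the two cherries; each of these pictures contradicts the one coming from $(A)$, and the whole difficulty is to express this quartet-level clash using only quintet axioms.

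For $(A)\wedge(B)$ I would first saturate $(A)$ (via the saturation rule~(ii) for $(2,3)$ quintets) by introducing $x$ while keeping the pair $b,d$, which, combining the two symmetric instances, yields $(A)\Rightarrow\bigl[(a,x\,|\,b,d,y)\wedge(c,x\,|\,b,d,y)\bigr]\vee(a,c\,|\,\overline{b,d,x})$. In the second alternative $(a,c\,|\,\overline{b,d,x})$ exhibits $\{a,c\}$ as a complete cherry on $\{a,b,c,d,x\}$, whereas on the same $5$-set $(B)$ puts $a,c$ inside the larger cherry $\{a,c,d\}$; since $\{a,c\}$ is then not complete, thinness rules this out. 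In the first alternative I would saturate $(B)$ by introducing $y$ and this time dropping $c$, getting $(B)\Rightarrow\bigl[(b,y\,|\,a,c,d)\wedge(x,y\,|\,a,c,d)\bigr]\vee(b,x\,|\,\overline{a,d,y})$: now $(b,y\,|\,a,c,d)$ and $(A)$ are distinct quintets on $\{a,b,c,d,y\}$, and $(b,x\,|\,\overline{a,d,y})$ and the available $(a,x\,|\,b,d,y)$ are quintets on $\{a,b,d,x,y\}$ carrying the cherries $\{b,x\}$ and $\{a,x\}$, which no single quintet can carry at once. Thinness closes every case, and Lemma~\ref{lem: unione di lemmi} is not needed here.

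For $(A)\wedge(C)$ the same opening move gives $(A)\Rightarrow\bigl[(a,x\,|\,b,d,y)\wedge(c,x\,|\,b,d,y)\bigr]\vee(a,c\,|\,\overline{b,d,x})$. The branch $(a,c\,|\,\overline{b,d,x})$ again displays the complete cherry $\{a,c\}$ on $\{a,b,c,d,x\}$, which is incompatible by thinness with $(C)=(a,b\,|\,c,d\,|\,x)$, where $a$ and $c$ belong to different cherries. For the other branch it suffices to contradict its conjunct $(a,x\,|\,b,d,y)$, and here I would appeal to Lemma~\ref{lem: unione di lemmi}: under the relabelling $\hat a=a,\ \hat b=b,\ \hat c=d,\ \hat x=c,\ \hat d=x,\ \hat y=y$ the central quintet $(\hat a,\hat b\,|\,\hat c,\hat x\,|\,\hat d)$ is exactly $(C)$ and the forbidden partner $(G)=(\hat a,\hat d\,|\,\hat b,\hat c,\hat y)$ is exactly $(a,x\,|\,b,d,y)$, so their coexistence is already excluded.

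The main obstacle is that the two quintets one wishes to compare naturally live on different $5$-subsets, so thinness does not apply to them directly. The key device is to choose, in each saturation, which element of the cherry gets replaced by the new leaf, so that the produced quintet shares a $5$-set with the hypothesis it must contradict: dropping $y$ from $(A)$ brings it onto $\{a,b,c,d,x\}$ to meet $(B)$ and $(C)$, and dropping $c$ from $(B)$ brings it onto $\{a,b,d,x,y\}$ to meet $(a,x\,|\,b,d,y)$. The one configuration where no common $5$-set can be arranged this way, namely a $(2,3)$ quintet against another $(2,3)$ quintet in the second branch of the $(A)\wedge(C)$ analysis, is precisely the situation handled by Lemma~\ref{lem: unione di lemmi}, which is why invoking it there is both natural and, apparently, necessary.
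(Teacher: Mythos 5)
Your proof is correct. For $(A)\wedge(B)$ it is essentially the paper's own argument: the same saturation of $(A)$ into $\bigl[(a,x\,|\,b,d,y)\wedge(c,x\,|\,b,d,y)\bigr]\vee(a,c\,|\,\overline{b,d,x})$, the same saturation of $(B)$ replacing $c$ by $y$, and the same three thinness clashes on the common $5$-sets $\{a,b,c,d,x\}$, $\{a,b,c,d,y\}$ and $\{a,b,d,x,y\}$. Where you genuinely diverge is $(A)\wedge(C)$: the paper never invokes Lemma \ref{lem: unione di lemmi} in this proof; it instead saturates $(C)$ with $y$ into three cases $(C.1)$, $(C.2)$, $(C.3)$, eliminates $(C.1)\wedge(A.1)$ and $(C.2)\wedge(A.1)$ by thinness, and handles the last case by transitivity ($(C.3)\wedge(C)$ implies $(a,b\,|\,\overline{d,x,y})$, which contradicts $(A.1)$). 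Your shortcut --- recognizing that $(C)$ together with the conjunct $(a,x\,|\,b,d,y)$ of the first branch is exactly the forbidden pair $(A)\wedge(G)$ of Lemma \ref{lem: unione di lemmi} under the relabelling you give --- is legitimate, since that lemma is proved earlier and independently of the present one, and it saves the second saturation and the transitivity step entirely. What the paper's longer route buys is that Lemma \ref{lem: osservazione nuova} stays self-contained, resting only on the TTS axioms rather than on Lemma \ref{lem: unione di lemmi}. One inaccuracy in your closing commentary (it does not affect the proof): the clash that Lemma \ref{lem: unione di lemmi} resolves there is between the $(2,2,1)$ quintet $(C)$ and the $(2,3)$ quintet $(a,x\,|\,b,d,y)$, not between two $(2,3)$ quintets.
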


\begin{proof}
 As $Q$ is saturated, 
 $(A)$ implies at least one of the following:

$(A.1) \; (a , x \, |\, b, d, y) \wedge (c, x \, |\, b, d, y)$,

$(A.2) \; (a ,c \, | \, \overline{b , d, x})$.

and $(B)$ implies  at least one of the following:
 
 $(B.1)  \;  (b, y  \,|\, a,c ,  d), $ 

$ (B.2) \; (b, x  \,|\, \overline{ a, d, y})   $.
 
As $Q$ is thin, each of $(B)\wedge (A.2)$, $(B.1)\wedge (A)$,
$(B.2)\wedge (A.1)$ is impossible, which concludes the proof that 
$(A) \wedge (B)$ cannot hold.

Since $Q$ is saturated, from $(C)$ we get at least one of the following:
 
$(C.1) \; (a , y \, | \, c , d \, |\, x)\; \wedge \;(b , y \, | \, c , d \,
|\,x)$,

$(C.2) \; (a, b \, | \, c , y \, | \, x ) \; \wedge \; (a, b \, | \, d , y \, |
\, x )$,

$(C.3) \; (a , b \, | \, c , d \, | \, y)$.

By the thinness of $Q$, each of  $(C)\wedge(A.2)$,
$(C.1)\wedge(A.1)$, $(C.2)\wedge(A.1)$ cannot hold. 
Moreover $(C.3)\wedge (C)$ implies $(a,b \;|\; \overline{d, x,y})$, which contradicts $(A.1)$ and this conludes the proof that 
$(A) \wedge (C)$ cannot hold.
\end{proof}

\begin{prop} \label{esistequalsiasi}
Let $Q$ be a TTS quintet system  in $X$.  Let $a_1, a_2, b_1,b_2 \in X$.

There exists $x \in X$ such that at least one of the following holds:

$(1) \; (a_1, a_2 \, |\, b_1, b_2 \, | \, x)  $, 

$(2) \;  (a_1, a_2 \, |\, b_1, b_2 , x)  $, 

$(3) \; (a_1, a_2 , x \, |\, b_1, b_2 )  $, 

$(4) \;(a_1, a_2 \, |\, b_1, x \, |\, b_2)  $, 

$(5) \; (a_1, a_2 \, |\, b_2, x \, |\, b_1)  $, 

$(6) \;  (a_1, x \, |\, b_1, b_2 \, |\, a_2)  $, 

$(7) \;  (a_2, x \, |\, b_1, b_2 \, |\, a_1)  $

if and only if for any $x \in X$ at least one of (1),....., (7) holds.

\end{prop}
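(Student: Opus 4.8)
The plan is to prove the nontrivial implication; the converse is immediate, since once the displayed property holds for every admissible fifth element it holds in particular for one of them. (I tacitly assume that $a_1,a_2,b_1,b_2$ are pairwise distinct and that $|X|\ge 5$, so that there is at least one admissible $x$; otherwise the statement is vacuous.) To reformulate the conclusion, note that by thinness, for each $x\in X\setminus\{a_1,a_2,b_1,b_2\}$ there is a unique quintet $P_x$ on the $5$-set $Y_x:=\{a_1,a_2,b_1,b_2,x\}$. A direct inspection of the $15+10+1=26$ quintets on a $5$-set shows that conditions $(1)$--$(7)$ are precisely the seven quintets on $Y_x$ whose restriction to $\{a_1,a_2,b_1,b_2\}$ is the quartet $(a_1,a_2\,|\,b_1,b_2)$; I will call such a $P_x$ \emph{good} and each of the remaining $19$ quintets on $Y_x$ \emph{bad}. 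With this terminology, what must be shown is that a good $P_{x_0}$ and a bad $P_{y}$ cannot coexist.

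The main tools are Lemmas \ref{lem: unione di lemmi} and \ref{lem: osservazione nuova}, which forbid exactly the simultaneous occurrence of two quintets living on $5$-sets that overlap in four elements and group those four shared elements in incompatible ways. Concretely, each good quintet of the form $(4)$--$(7)$ has the shape $(a,b\,|\,c,x\,|\,d)$ of hypothesis $(A)$ of Lemma \ref{lem: unione di lemmi}, with $x_0$ in the role of $x$; and for a suitable relabelling of $\{a_1,a_2,b_1,b_2\}$ many of the bad quintets $P_y$ occur among the companions $(B)$--$(H)$ of that lemma, whose free fifth element is $y$. Thus, running through the two admissible matchings of the given good quintet into $(a,b\,|\,c,x\,|\,d)$ and invoking Lemma \ref{lem: unione di lemmi} (and, when $P_y$ is of kind $(3,2)$, Lemma \ref{lem: osservazione nuova}), together with the thinness clauses thin\,$(i)$--$(iv)$, I would exclude the coexistence of such a good $P_{x_0}$ with each bad $P_y$. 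The bad configurations not hit by a single application of these devices will be handled by the saturation step described next.

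For the good quintets of kinds $(1)$, $(2)$, $(3)$, which are not of the form $(a,b\,|\,c,x\,|\,d)$ with fifth element $x_0$, I would first use saturation to bring $y$ into play. For instance, from $(1)$ the saturation clause $(i)$ yields $(a_1,a_2\,|\,b_1,b_2\,|\,y)$ --- which is $(1)$ for $y$ and contradicts the badness of $P_y$ by thinness --- or else one of $(a_1,y\,|\,b_1,b_2\,|\,x_0)$, $(a_1,a_2\,|\,b_1,y\,|\,x_0)$. Each of these last two is a quintet of kind $(2,2,1)$ that shares four elements with $Y_y$ and is now of the shape required by Lemma \ref{lem: unione di lemmi}; combining it with the assumed bad $P_y$ produces the desired contradiction. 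The saturation clauses $(ii)$ and $(iii)$ dispose of the good quintets $(2)$ and $(3)$ in the same manner, and the same mechanism --- saturating the given good quintet so as to introduce $y$, and then appealing to a lemma or to a thinness or transitivity clause --- also settles the residual bad $P_y$ left over from the previous paragraph.

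I expect the heart of the argument, and its only genuine difficulty, to be the exhaustiveness of this finite verification: one must check that \emph{every} one of the $19$ bad configurations $P_y$ is excluded for \emph{every} admissible good $P_{x_0}$, choosing in each case the correct relabelling of $\{a_1,a_2,b_1,b_2\}$, the correct clause among $(B)$--$(H)$ of Lemma \ref{lem: unione di lemmi}, of Lemma \ref{lem: osservazione nuova}, of thin\,$(i)$--$(iv)$, and, where needed, the correct preliminary saturation step. To keep the bookkeeping manageable I would organise the check according to the quartet that the bad $P_y$ induces on $\{a_1,a_2,b_1,b_2\}$, namely $(a_1,b_1\,|\,a_2,b_2)$, $(a_1,b_2\,|\,a_2,b_1)$, or the star $(a_1,a_2,b_1,b_2)$; the conceptual content is simply that the two lemmas, reinforced by thinness and transitivity, forbid precisely these incompatibilities between pair-separating quintets on overlapping $5$-sets.
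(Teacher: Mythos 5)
Your reduction is exactly the paper's: by thinness each admissible $y$ carries a unique quintet $P_y$ on $\{a_1,a_2,b_1,b_2,y\}$, conditions $(1)$--$(7)$ are the seven ``good'' quintets, and the claim amounts to showing that a good $P_{x_0}$ and a bad $P_y$ cannot coexist; your $19$ bad quintets are precisely the paper's list $(8)$--$(14)$, and your toolkit (the two lemmas plus saturation, thinness, transitivity) is the paper's toolkit. But what you have written is a plan, not a proof. The entire content of the paper's argument is the exhaustive verification you defer (``I would exclude\dots'', ``I expect the heart of the argument\dots to be the exhaustiveness of this finite verification''): for each good shape one must perform a specific saturation step, choose a specific relabelling, and invoke a specific clause of Lemma \ref{lem: unione di lemmi}, of Lemma \ref{lem: osservazione nuova}, or of thinness, and the paper spends several pages doing exactly this. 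Identifying that a finite check is needed is not the same as performing it; as it stands, your proposal establishes only the trivial direction.

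Worse, the one mechanism you do describe --- saturate the \emph{good} quintet so as to introduce $y$, then play the resulting quintet against the bad $P_y$ via the lemmas --- provably fails in concrete cases, which is why the paper's case analysis contains moves your outline does not contemplate. Take the good quintet $(1)=(a_1,a_2\,|\,b_1,b_2\,|\,x_0)$ and the bad star $P_y=(a_1,a_2,b_1,b_2,y)$. Saturating $(1)$ with respect to $y$ yields $(a_1,a_2\,|\,b_1,b_2\,|\,y)$ (killed by thinness) or else a quintet such as $(a_1,y\,|\,b_1,b_2\,|\,x_0)$; but no clause of Lemma \ref{lem: unione di lemmi}, of Lemma \ref{lem: osservazione nuova}, or of the thinness conditions (i)--(iv) forbids the coexistence of $(a_1,y\,|\,b_1,b_2\,|\,x_0)$ with a star quintet, because star quintets simply do not occur among the companions $(B)$--$(H)$, nor in Lemma \ref{lem: osservazione nuova}, nor in the thinness clauses. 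The paper resolves this by saturating the \emph{bad} quintet $(8)$ with respect to $x_0$ (saturation clause (iii)), producing a quintet on $\{a_1,a_2,b_1,b_2,x_0\}$ that contradicts $(1)$ by thinness; the same ``saturate the bad one'' move reappears in its treatment of $(9)$ under case $(2)$ and of $(12)$ under case $(1)$, where \emph{both} quintets are saturated and the resulting pieces compared. So the missing verification is not routine bookkeeping along the lines you set out: without enlarging your mechanism to include saturation of the bad quintet (and the double-saturation comparisons), whole families of cases cannot be closed, and the proposition remains unproved.
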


\begin{proof}
$\Leftarrow $ Obvious.

$\Rightarrow$ Suppose, contrary to our claim, that 
there exists $y \in X$ such that no one of (1),...., (7) holds (with $y$ instead
of $x$).
So we must have at least one of the following:

$(8) \;(a_1, a_2 ,  b_1, b_2 , y)  $ 

$ (9)  \;(a_i , b_j \, |\, a_l, b_r , y)  $ for  some $i,l,j,r$ with 
$\{i,l\}=\{1,2\}= \{j,r\}$,  

$(10)  \;(a_i, y \, |\, a_l, b_1, b_2 )  $ for   some $i,l$ with
$\{i,l\}=\{1,2\}$,

$(11) \;(b_i, y \, |\, b_l, a_1, a_2)  $ for   some $i,l$ with
$\{i,l\}=\{1,2\}$,

$(12)  \;(a_i, b_j \, |\, a_l, b_r \, |\, y)  $ for   some $i,l,j,r$ with
$\{i,l\}=\{1,2\}= \{j,r\}$,  

$(13)  \;(a_i, y \, |\, a_l, b_j \, |\, b_r)  $ for   some $i,l,j,r$
with$\{i,l\}=\{1,2\}= \{j,r\}$,  
 
$(14) \; (b_j, y \, |\, b_r, a_i \, |\, a_l)  $ for   some $i,l,j,r$
with$\{i,l\}=\{1,2\}= \{j,r\}$.
 
 \medskip
 
 \underline{Suppose (1) holds.}
By the saturation of $Q$, it  implies at least one of the following: 

$(1.1) \;  (a_1,y \,|\, b_1,b_2\,|\; x) \wedge (a_2,y \,|\, b_1,b_2\,|\; x), $

$(1.2) \; (a_1 ,a_2 \,|\, b_1,y \, | \, x ) \wedge 
(a_1 ,a_2 \,|\, b_2,y \, | \, x ), $

$(1.3) \;  (a_1 ,a_2 \,|\, b_1 , b_2 \,|\, y ) $.

$\bullet $ Suppose  (8) holds.
Since $Q$ is saturated, (8) implies $(a_1,a_2, b_1,b_2, x) \vee 
(a_i ,x \,|\, a_j , b_1, b_2 ) \vee (b_i ,x \,|\, b_j , a_1, a_2 )  $ for some 
$i,j$ with   $\{i,j\}=\{1,2\}$, which contradicts (1).

$\bullet $ Suppose  (9) holds. We can suppose that $i=j=1$ and $l=r=2$ in (9),
so $ (a_1 , b_1 \, |\, a_2, b_2 , y)  $ holds. We get a contradiction by
Lemma \ref{lem: osservazione nuova}, case (C).

$\bullet $ Suppose  (10) holds. We can suppose that $i=1$ in (10), so $ (a_1 , y
\, |\, a_2, b_1 , b_2)  $ holds, but, by the thinness of $Q$, this is impossible.

$\bullet $ Suppose  (11) holds. 
This case is analogous to the previous case (by swapping $(a_1, a_2)$ with 
$(b_1, b_2)$).
 
 $\bullet $ Suppose  (12) holds. We can suppose that $i=j=1$ in (12), so $ (a_1
,b_1 \, |\, a_2, b_2 \,|\, y)  $ holds. 
By the saturation of $Q$,  this implies at least one of the following: 
 
$(12. 1)  \;  (a_1, x  \,|\, a_2,b_2\, |\,y)  \wedge  (b_1, x  \,|\, a_2,b_2\,
|\,y),$ 
 
$ (12.2) \;   (a_1, b_1  \,|\, a_2,x \,|\, y )  \wedge (a_1, b_1  \,|\, b_2,x
\,|\, y ),
   $
  
   $ (12.3)   \;  (a_1, b_1 \,|\, a_2,b_2 \, |\,x)    $.
 
 Observe that (1.1) contradicts  both (12.1) and (12.2), 
 (1.2) contradicts both (12.1) and (12.2),
 (1.3) contradicts (12), and, finally, (12.3) contradicts (1).
 
$\bullet $ Suppose  (13) holds. 
We can suppose $i=r=1$, so $ (y, a_1  \, |\, a_2, b_2 \,|\, b_1)  $ holds. 
   We get a contradiction by Lemma \ref{lem: unione di lemmi}, case $(B)$.

$\bullet $ Suppose  (14) holds. 
This case is analogous to  the previous case (by swapping $(a_1, a_2)$ with 
$(b_1, b_2)$).

 \underline{Suppose (2) holds.}
By the saturation of $Q$, it  implies at least one of: 

$(2.1) \; (a_1,y \,|\, b_1,b_2, x) \wedge (a_2,y \,|\, b_1,b_2, x) , $

$(2.2) \; (a_1 ,a_2 \,|\, \overline{ b_1, b_2 ,  y} )   $.

$\bullet $ Suppose  (8) holds. 
By the saturation of $Q$, it  implies: 

$  (a_i ,x  \,|\, b_1,b_2, a_j) \;\quad \vee \quad\; (b_r,x \,|\, a_1,a_2, b_l) 
\;  \quad \vee \quad 
 \; (a_1 ,a_2 , b_1, b_2 ,  x )   $

for some $i,j$ with $\{i,j\} =\{1,2\}$ and 
some $r,l$ with $\{r,l\} =\{1,2\}$.
All the possibilities contradict (2).

$\bullet $ Suppose  (9) holds. We can suppose $i=j=1$.
So $ (a_1 ,b_1 \, |\, a_2, b_2 , y)  $ holds. 
By the saturation of $Q$, it  implies at least one of the following:

$(9.1) \;  (a_1,x \,|\, a_2,b_2, y) \wedge (b_1,x \,|\, a_2,b_2, y)  $,

$(9.2) \; (a_1 ,b_1 \,|\, \overline{ a_2, b_2 ,  x} )   $.
                   
Observe that (2.2) contradicts (9) and (9.2) contradicts (2). Moreover (2.1)
contradicts (9.1).

$\bullet $ Suppose  (10) holds.  Since $Q$ is thin, we get a contradiction.

$\bullet $ Suppose  (11) holds. 
We can suppose $i=1$, and we get a contradiction by Lemma \ref{lem: osservazione
nuova}, case $(B)$.

$\bullet $ Suppose  (12) holds. We get a contradiction by Lemma \ref{lem:
osservazione nuova}, case (C).

$\bullet $ Suppose  (13) holds. 
We can suppose $i=r=1$, so $ (y, a_1  \, |\, a_2, b_2 \,|\, b_1)  $ holds. 
 We get a contradiction by Lemma \ref{lem: unione di lemmi}, case $(F)$.

$\bullet $ Suppose  (14) holds. We can suppose $j=l=1$, so 
$ (y, b_1  \, |\, a_2, b_2  \, |\,  a_1 ) $ holds. We get a contradiction by Lemma \ref{lem: unione di lemmi}, case $(G)$. 

 \underline{Suppose (3) holds.}
This case is analogous to the case where (2) holds (swap $(a_1 , a_2)$ with
$(b_1 , b_2)$).

 \underline{Suppose (4) holds.} 
 By the saturation of $Q$, it implies at least one of the following: 
 
$(4. 1)  \;  (a_1, y  \,|\, b_1, x \, |\,b_2)  
  \; \wedge   (a_2, y  \,|\, b_1, x \, |\,b_2)   $, 
 
$ (4.2) \;   ( a_1 , a_2 \,|\, b_1, y \,|\, b_2 )   $,
  
$ (4.3)   \;  (a_1 ,a_2 \,|\, b_1, x  \, |\,y)  $.

$\bullet $ Suppose  (8) holds.  By the saturation of $Q$, condition 
(8) implies  $ ( a_1, a_2 , b_1, b_2, x)  \vee  ( a_1, x \, |\,  a_2 , b_1, b_2)
\vee  ( a_2, x \, |\,  a_1 , b_1, b_2) \vee  ( b_1, x \, |\,  a_1 , a_2, b_2) 
\vee  ( b_2, x \, |\,  a_1 , a_2, b_1)  $, which contradicts (4).

$\bullet $ Suppose  (9) holds.  

First case: $ i=j=1$ in (9).  So we have $ ( a_1, b_1  \, |\,  
 a_2, b_2, y) $. By Lemma \ref{lem: unione di lemmi}, case $(F)$ we get a
contradiction.

Second case:  $ i=r=1$ in (9).
So we have $ ( a_1, b_2  \, |\,   a_2, b_1, y) $. We get a contradiction 
by Lemma \ref{lem: unione di lemmi}, case $(G)$.

 $\bullet $ Suppose  (10) holds.  
 We can suppose $i=1$ in (10). Since $Q$ is thin, (4) contradicts (10).

 $\bullet $ Suppose  (11) holds.  
   
 First suppose that $i=1$ in (11). This is impossible by Lemma \ref{lem:
unione di lemmi}, case $(E)$.
 
 Now suppose that $i=2$ in (11). This is impossible by Lemma \ref{lem: unione di
lemmi}, case $(H)$.
 
 $\bullet $ Suppose  (12) holds. We can suppose $i=j=1$. By Lemma
\ref{lem: unione di lemmi}, case $(B)$ we get a contradiction. 
  
 $\bullet $ Suppose  (13) holds. 
 We can suppose $i=1$. 
 If $ r=1$ we get a contradiction by Lemma \ref{lem: unione di lemmi}, case
$(C)$.
 If $r=2$ we get a contradiction by the thinness of $Q$.

 $\bullet $ Suppose  (14) holds. 
 We can suppose $l=1$.
 If $j=1$  we get a contradiction by Lemma \ref{lem: unione di lemmi}, case
$(D)$.
 If $j=2$ we get a contradiction by Lemma \ref{lem: unione di lemmi}, case $(C)$.

 \underline{Suppose (5) holds.}  This case is analogous to the case
 where (4) holds (swap $b_1$ with $b_2$).

 \underline{Suppose (6) holds.}  This case is analogous to the case
 where (4) holds (swap $a_1$ with $b_1$ and  $a_2$ with $b_2$).

 \underline{Suppose (7) holds.}  This case is analogous to the case
 where (6) holds (swap $a_1$ with $a_2$).
 \end{proof}

\bigskip

The next goal is to prove that a quartet system  $S$ as in Definition
\ref{defi:quartet S coming from quintet Q} is in fact
TTS.

\begin{prop}\label{prop: quartets induced by quintets are thin}
 A quartet system  $S$ associated to a TTS quintet system $Q$ 
 as in Definition \ref{defi:quartet S coming from quintet Q} is thin. 
\end{prop}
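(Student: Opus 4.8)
The plan is to obtain thinness of $S$ as an immediate consequence of Proposition~\ref{esistequalsiasi} and the thinness of $Q$, so that the combinatorial work already packaged into Lemmas~\ref{lem: unione di lemmi} and \ref{lem: osservazione nuova} and into Proposition~\ref{esistequalsiasi} need not be repeated. A quartet system is thin precisely when every $4$-subset carries exactly one quartet; reading the star quartet on a $4$-subset as the one that holds when no $(2,2)$-quartet on it lies in $S$, existence is automatic and the entire content is uniqueness, namely that no two distinct $(2,2)$-quartets on a common $4$-subset can both belong to $S$. This is the statement I would prove.

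I would argue by contradiction. Fix a $4$-subset $\{a,b,c,d\}$ and assume two of its three $(2,2)$-quartets lie in $S$; after relabelling I may take them to be $(a,b\,|\,c,d)$ and $(a,c\,|\,b,d)$. If $|X|=4$ there are no quintets at all, so by Definition~\ref{defi:quartet S coming from quintet Q} no $(2,2)$-quartet lies in $S$ and there is nothing to prove; hence I may fix an admissible fifth element $x\in X\setminus\{a,b,c,d\}$.

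The crucial remark is that conditions $(i)$--$(vii)$ of Definition~\ref{defi:quartet S coming from quintet Q} for $(a,b\,|\,c,d)$ coincide, under the dictionary $y\leftrightarrow x$, with conditions $(1)$--$(7)$ of Proposition~\ref{esistequalsiasi} taken with $(a_1,a_2,b_1,b_2)=(a,b,c,d)$. Thus ``$(a,b\,|\,c,d)\in S$'' merely asserts the \emph{existence} of a witness, and Proposition~\ref{esistequalsiasi} promotes this to the statement that for \emph{every} admissible fifth element, in particular for the chosen $x$, one of $(1)$--$(7)$ holds. Consequently there is a quintet $P\in Q$ on $\{a,b,c,d,x\}$ equal to one of these seven partitions, and a glance at the list shows that each of them has $\{a,b\}$ or $\{c,d\}$ among its blocks. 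Applying the same reasoning to $(a,c\,|\,b,d)\in S$, now with $(a_1,a_2,b_1,b_2)=(a,c,b,d)$, yields a quintet $P'\in Q$ on the \emph{same} set $\{a,b,c,d,x\}$ with $\{a,c\}$ or $\{b,d\}$ among its blocks.

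By thinness of $Q$ there is at most one quintet on $\{a,b,c,d,x\}$ in $Q$, so $P=P'$. But then this single quintet would carry a block $B_1\in\{\{a,b\},\{c,d\}\}$ and a block $B_2\in\{\{a,c\},\{b,d\}\}$; these cannot be the same block, since $\{a,b\},\{c,d\}$ have no member in common with $\{a,c\},\{b,d\}$, and they cannot be different blocks either, since any choice of $B_1$ and $B_2$ meets in exactly one element whereas distinct blocks of a partition are disjoint. This contradiction establishes uniqueness and hence thinness. I expect the only delicate point to be the bookkeeping of the previous paragraph --- matching Definition~\ref{defi:quartet S coming from quintet Q} with Proposition~\ref{esistequalsiasi} and checking that the two seven-element families of quintets are disjoint --- but this is a short finite verification rather than a fresh case analysis.
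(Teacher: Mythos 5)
Your proof is correct and follows essentially the same route as the paper: both use Proposition~\ref{esistequalsiasi} to upgrade the existential witness in Definition~\ref{defi:quartet S coming from quintet Q} to a universal statement, then specialize both hypothetical quartets to a common fifth element so that thinness of $Q$ forces a single quintet on $\{a,b,c,d,x\}$ with incompatible blocks. Your write-up merely makes explicit the block-disjointness check and the $|X|=4$ edge case that the paper's proof leaves implicit in the phrase ``by the thinness of $Q$''.
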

\begin{proof}
 Assume by contradiction that $(a,b\, |\,c,d)\wedge (a,c\,
|\,b,d)$ holds; by  Proposition \ref{esistequalsiasi}, 
 the hypothesis that $(a,b\, |\,c,d)$ holds 
 is equivalent to 
 say that, for any $y\in X$, we have:
 \begin{align} \label{conditions one}
  (a,b\, |\,\overline{c,d,y}) \lor (c,d\, |\,\overline{a,b,y}) .
 \end{align}
 Moreover, by Definition \ref{defi:quartet S coming from quintet Q},
 the fact that $(a,c\, |\,b,d)$ means that there exists $x\in X$ such that
 \begin{align} \label{conditions two}
  (a,c\, |\,\overline{b,d,x}) \lor (b,d\, |\,\overline{a,c,x}) .
 \end{align}
If we choose $y=x$ in \eqref{conditions one}, we get a contradiction with \eqref{conditions two} by  the thinness of $Q$.
\end{proof}

\begin{prop}\label{prop: quartets induced by quintets are transitive}
 A quartet system  $S$ associated to a TTS quintet system  $Q$ 
 as in Definition \ref{defi:quartet S coming from quintet Q} is transitive. 
\end{prop}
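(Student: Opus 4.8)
The goal is to verify the quartet transitivity axiom for $S$, namely that $(a_1, x \,|\, b_1, b_2)\in S$ and $(a_2, x \,|\, b_1, b_2)\in S$ force $(a_1, a_2 \,|\, b_1, b_2)\in S$, where we may assume $a_1,a_2,x,b_1,b_2$ are five distinct elements (otherwise the conclusion is vacuous or trivial). The plan is to leverage Proposition \ref{esistequalsiasi} in order to upgrade the two hypotheses from existential ``there is a witness'' statements into universal ``for every witness'' statements, and then to feed in the two most convenient witnesses, namely $a_2$ into the first hypothesis and $a_1$ into the second.

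Concretely, by Definition \ref{defi:quartet S coming from quintet Q} the membership $(a_1,x\,|\,b_1,b_2)\in S$ asserts the existence of a fifth element for which one of seven quintets on $\{a_1,x,b_1,b_2,\cdot\}$ holds, and these seven quintets are precisely conditions $(1)$--$(7)$ of Proposition \ref{esistequalsiasi} applied to the four-tuple $a_1,x,b_1,b_2$. Hence that proposition lets me conclude the disjunction for \emph{every} fifth element, and I instantiate it at $a_2$; symmetrically I instantiate the disjunction coming from $(a_2,x\,|\,b_1,b_2)\in S$ at $a_1$. This produces two seven-fold disjunctions of quintets, all living on the single set $Y=\{a_1,a_2,x,b_1,b_2\}$. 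The next step is a direct bookkeeping: reading each disjunct back through Definition \ref{defi:quartet S coming from quintet Q} with $x$ playing the role of the auxiliary witness $y$. From the first disjunction, the quintets $(a_1,a_2\,|\,b_1,b_2\,|\,x)$, $(a_1,a_2,x\,|\,b_1,b_2)$, $(a_1,x\,|\,b_1,b_2\,|\,a_2)$ and $(a_2,x\,|\,b_1,b_2\,|\,a_1)$ are exactly instances $(i)$, $(vii)$, $(vi)$ and $(v)$ of that definition, so any of them already yields $(a_1,a_2\,|\,b_1,b_2)\in S$; the analogous four disjuncts from the second disjunction do the same. Thus I am done unless one of the three remaining disjuncts holds on each side, namely one of $(a_1,x\,|\,b_1,b_2,a_2)$, $(a_1,x\,|\,b_1,a_2\,|\,b_2)$, $(a_1,x\,|\,b_2,a_2\,|\,b_1)$ on the first side, and one of $(a_2,x\,|\,b_1,b_2,a_1)$, $(a_2,x\,|\,b_1,a_1\,|\,b_2)$, $(a_2,x\,|\,b_2,a_1\,|\,b_1)$ on the second.

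The finishing observation is that in every one of the three leftover quintets of the first side $x$ sits in a two-element block together with $a_1$, whereas in every one of the three leftover quintets of the second side $x$ sits in a two-element block together with $a_2$. Since a partition assigns $x$ to a unique block, and $a_1\neq a_2$, no leftover quintet of the first side can equal any leftover quintet of the second side. As both would be quintets on the same five-set $Y$, their simultaneous occurrence contradicts the thinness of $Q$. Therefore at least one of the ``good'' disjuncts must hold, giving $(a_1,a_2\,|\,b_1,b_2)\in S$ and establishing transitivity. I expect the only delicate part to be the purely combinatorial matching of each of the fourteen quintets to the defining instances $(i)$--$(vii)$; once that dictionary is set up, the thinness argument on the block containing $x$ closes the proof uniformly over all the residual combinations, so no case-by-case thinness or transitivity invocations of $Q$ are needed beyond the single observation above.
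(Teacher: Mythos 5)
Your proof is correct, and it takes a genuinely different and substantially shorter route than the paper's. Both proofs begin identically: Definition \ref{defi:quartet S coming from quintet Q} gives an existential seven-fold disjunction of quintets for each of the two hypothesis quartets, and Proposition \ref{esistequalsiasi} upgrades each to a universal one (your dictionary identifying the definition's cases $(i)$--$(vii)$ with conditions $(1)$--$(7)$ of the proposition, read as unordered partitions, is accurate). The divergence is in how the universal statements are used. The paper evaluates both disjunctions at one and the same arbitrary witness (in its notation the two quartets are $(a_1,a_2\,|\,b_1,b_2)$ and $(a_2,a_3\,|\,b_1,b_2)$ and the witness may well be a sixth element of $X$), and then grinds through all pairings $(1.k)\wedge(2.j)$ --- roughly thirty cases after symmetry reductions --- each disposed of by some combination of saturation and transitivity of $Q$, thinness, and cases $(B)$--$(H)$ of Lemma \ref{lem: unione di lemmi} or Lemma \ref{lem: osservazione nuova}. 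You instead evaluate the two disjunctions at two \emph{different}, cleverly chosen witnesses (the first at $a_2$, the second at $a_1$), which forces all fourteen quintets to be partitions of the single $5$-set $\{a_1,a_2,x,b_1,b_2\}$: eight of them yield $(a_1,a_2\,|\,b_1,b_2)\in S$ directly from the definition, and each of the nine leftover pairings consists of two \emph{distinct} partitions of that same set (the block containing $x$ is $\{a_1,x\}$ on one side and $\{a_2,x\}$ on the other, and $a_1\neq a_2$), so a single appeal to the thinness of $Q$ finishes. What your route buys is economy and transparency: beyond Proposition \ref{esistequalsiasi} it needs only thinness of $Q$, so all the heavy machinery (saturation, transitivity of $Q$, the two lemmas) stays confined to that proposition, whereas the paper re-invokes it in nearly every case. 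The only points to make explicit --- both of which the paper also leaves implicit --- are that $a_1,a_2,x,b_1,b_2$ may be assumed pairwise distinct (otherwise the hypotheses or the conclusion are not well-formed quartets), and that the elements fed into the universal statement of Proposition \ref{esistequalsiasi} must be distinct from the four given points, which your choices of $a_2$ and $a_1$ indeed satisfy.
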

\begin{proof}
 The goal is to prove $(a_1 , a_2 \, |\, b_1 , b_2 ) \wedge (a_2 , a_3 \, |\,
b_1 , b_2 ) \Rightarrow (a_1 , a_3 \, |\, b_1 , b_2 )$. 
 Recall that, by Proposition \ref{esistequalsiasi}, 
 $(a_1 , a_2 \, |\, b_1 , b_2 )$ means that, for every $x\in X$, at least one of the
following conditions must hold:

 $(1.1) \; (a_1 , a_2 \, |\, b_1 , b_2 \, |\,x )$,
 
 $(1.2) \;(a_1  , a_2 \, |\, b_1 , b_2 , x) $,
 
 $(1.3) \; (a_1 , a_2 , x\, |\, b_1 , b_2 )$,
 
 $(1.4) \; (a_1 ,  a_2 \, |\, b_1 , x \, |\, b_2 )$,
 
 $(1.5) \; ( a_1 , a_2 \, |\, b_2 , x \, |\, b_1)$,
 
 $(1.6) \; ( a_1 , x \, |\, b_1 , b_2 \, |\, a_2)$,
 
 $(1.7) \; ( a_2  , x \, |\, b_1 , b_2 \, |\, a_1 ) $.
 
 Similarly, by Proposition \ref{esistequalsiasi}, 
 $(a_2 , a_3 \, |\, b_1 , b_2 )$ means that, for every $x\in X$, at least one of the
following conditions holds:

  $(2.1)\; (a_2 , a_3 \, |\, b_1 , b_2 \, |\,x ) $,
  
  $(2.2)\; (a_2 , a_3 \, |\, b_1 , b_2 , x) $,
  
  $(2.3)\;(a_2 , a_3 , x\, |\, b_1 , b_2 ) $,
  
  $(2.4)\;(a_2 , a_3 \, |\, b_1 , x \, |\, b_2 ) $,
  
  $(2.5)\;( a_2 , a_3 \, |\, b_2 , x \, |\, b_1) $,
  
  $(2.6)\;( a_2 , x \, |\, b_1 , b_2 \, |\, a_3) $,
  
  $(2.7)\;( a_3 , x \, |\, b_1 , b_2 \, |\, a_2 )$. 
 
First we are going to show that, for any $k\in\{1,\ldots , 7\}$, 
if there exists $x $ such that 
$(1.k)\wedge(2.k) $ holds, then $ (a_1 , a_3 \, |\, b_1 , b_2 )$ holds; then we prove that if there exists $x $ such that one of 
the remaining pairings holds, then we get either $(a_1 , a_3 \, |\, b_1 , b_2 )$ or a
contradiction. Observe that, by symmetry, it is sufficient to consider the cases $(1.k)\wedge(2.j) $ with $j >k$.

\underline{Suppose $(1.h)\wedge(2.h)$} for some  $x\in X$ and $h\in\{1,\ldots ,
5\}$; then $(a_1 , a_3 \, |\, b_1 , b_2 ) $ follows from the
transitivity of $Q$ and  Definition \ref{defi:quartet S coming from quintet
Q}. 

\underline{Assume $(1.6)\wedge(2.6) $};
since $Q$ is saturated, $(1.6)$ implies that

  $(x , a_3 \, |\, b_1 , b_2 \, |\, a_2 )\; (1.6.1) \quad\vee\quad (a_1 , x \,
|\, b_1 , a_3 \, |\, a_2 )\;(1.6.2)  
  \quad\vee\quad (a_1 , x \, |\, b_1 , b_2 \, |\, a_3 ) \;(1.6.3) $

and $(2.6)$ implies that 

  $(a_1 , x   \, |\, b_1 , b_2 \, |\, a_3 )\;(2.6.1)\quad\vee\quad (a_2 , x   \,
|\, b_1 , a_1 \, |\, a_3 )\;(2.6.2)
  \quad\vee\quad (a_2 , x   \, |\, b_1 , b_2 \, |\, a_1 ) \;(2.6.3)$. 

Each of $(1.6.1)\wedge(2.6)$, $(2.6.3)\wedge(1.6)$,
$(1.6.2)\wedge(2.6.2)$ contradicts the thinness of $Q$; moreover, by Definition \ref{defi:quartet S coming from quintet Q},  each of $(1.6.3)$ and  $(2.6.1)$  implies
$(a_1 ,a_3 \, |\, b_1 , b_2 ) $ and this allows us to conclude.

\underline{The case $(1.7)\wedge(2.7)$} can be recovered from the previous one
by swapping $a_1$ with $a_3$. 

\underline{The case $(1.1)\wedge(2.2) $} is 
impossible by Lemma \ref{lem: unione di lemmi}, case $(E)$.

\underline{Assume $(1.1)\wedge(2.3)$};
since $Q$ is saturated, from $(2.3)$ we get at least one of the following: 

  $(2.3.1)\;(b_1 , a_1 \, |\, a_2 , a_3  ,  x   ) $,
  
  $(2.3.2)\;(b_1 , b_2 \, |\, x   , a_3  ,  a_1 ) $,
  
  $(2.3.3)\;(b_1 , b_2 \, |\, x   , a_3 \, |\, a_1 ) $,
  
  $(2.3.4)\;(b_1 , b_2 \, |\, x   , a_1 \, |\, a_3 ) $,  

  $(2.3.5)\;(b_1 , b_2 \, |\, a_1 , a_3 \, |\, x   ) $.  
  
The occurrence of $(1.1) \wedge (2.3.1)$ is impossible
by Lemma \ref{lem: unione di lemmi}, case $(F)$.

Each of $(2.3.2)$, $(2.3.3)$, $(2.3.4)$, $(2.3.5)$ allows to
conclude, by 
 Definition \ref{defi:quartet S coming from quintet Q}, that $(a_1 , a_3 \,|\, b_1 , b_2 )$ holds.

\underline{The case $(1.1)\wedge(2.4) $} is
impossible by Lemma \ref{lem: unione di lemmi}, case $(D)$; swapping 
$b_1$ with $b_2$, we also get that \underline{the case $(1.1)\wedge(2.5) $} is
impossible.

\underline{Suppose $(1.1)\wedge(2.6) $};
since $Q$ is saturated, $(1.1)$ implies 

  $(a_1 , a_3 \, |\, b_1 , b_2 \, |\, x   )\;(1.1.1)\quad\vee\quad
  (a_1 , a_2 \, |\, b_1 , a_3 \, |\, x   )\;(1.1.2)\quad\vee\quad
  (a_1 , a_2 \, |\, b_1 , b_2 \, |\, a_3 ) \;(1.1.3)$

and  $(2.6)$ implies one of    $(2.6.1)$, $(2.6.2)$, $(2.6.3)$  above.
From $(1.1.1)$ as well as from $(1.1.3)$ and  from $(2.6.1)$ 
one can deduce,  by means of Definition
\ref{defi:quartet S
coming from quintet Q}, that $(a_1 , a_3 \, |\, b_1 , b_2 ) $ holds;
each  of $(2.6.3)\wedge (1.1)$ and $(1.1.2)\wedge (2.6.2)$
contradicts the thinness of $Q$ and this allows us to conclude.

\underline{Suppose $(1.1)\wedge(2.7) $};
since $Q$ is saturated, $(1.1)$ implies one of    $(1.1.1)$, $(1.1.2)$, $(1.1.3)$ and $(2.7)$ implies 

  $(a_1 , a_3 \, |\, b_1 , b_2 \, |\, a_2   )\;(2.7.1)\quad\vee\quad
  (a_3 , x \, |\, b_1 , a_1 \, |\, a_2   )\;(2.7.2)\quad\vee\quad
  (a_3 , x \, |\, b_1 , b_2 \, |\, a_1 ) \;(2.7.3)$.

From $(1.1.1)$ as well as from $(1.1.3)$  and
from  $(2.7.3)$ one can deduce that 
$(a_1 , a_3 \, |\, b_1 , b_2 ) $ holds. Finally, 
the case $(2.7.1)\wedge(1.1)$, as well as the case $(1.1.2)\wedge(2.7.2)$, contradicts the thinness of $Q$ and so we can conclude.

\underline{The case $(1.2)\wedge(2.3) $} is impossible by the thinness of $ Q$.

\underline{The case $(1.2)\wedge(2.4) $} is
impossible by  Lemma \ref{lem: unione di lemmi}, case $(E)$; swapping
$b_1$ with $b_2$, we also have that \underline{the case $(1.2)\wedge(2.5) $}
cannot hold.
 
\underline{The case $(1.2)\wedge(2.6)$} is impossible by the thinness of $Q$.

\underline{The case $(1.2)\wedge(2.7) $} cannot hold by Lemma \ref{lem:
unione di lemmi}, case $(H)$.

\underline{The case $(1.3)\wedge (2.4) $} cannot hold by Lemma
\ref{lem: unione di lemmi}, case $(G)$.

\underline{The case $(1.3)\wedge(2.5) $}
is analogous to the previous one swapping $b_1$ with $b_2$.

\underline{Assume $(1.3)\wedge(2.6) $};
since $Q$ is saturated,  $(1.3)$ implies 

  $(b_1 , a_3 \, |\, a_1 , a_2 , x   )\;(1.3.1)\quad\vee\quad
  (b_1 , b_2 \, |\, \overline{  a_1 , a_2, a_3 }  )\;(1.3.2)$

and,
similarly, $(2.6)$ implies one of $(2.6.k)$, $k=1,2,3$.
From $(2.6.1)$, as well as from $(1.3.2)$,
one can deduce  that $(a_1 , a_3 \, |\, b_1  , b_2 ) $ holds.
Moreover, $(2.6.3)\wedge(1.3)$, as well as  $(2.6.2)\wedge(1.3.1)$, contradicts the thinness of $Q$ and so we conclude.


\underline{Assume $(1.3)\wedge(2.7) $}. Since $Q$ is saturated,  
 $(1.3)$ implies one of $(1.3.1)$, $(1.3.2)$ and $(2.7) $ implies 
one of $(2.7.1)$, $(2.7.2)$, $(2.7.3)$. Observe that each of 
$(1.3.2)$, $(2.7.1)$, $(2.7.3)$ implies $(a_1 , a_3 \, |\, b_1  , b_2 ) $. Moreover,   $(1.3.1) \wedge (2.7.2)$
contradicts the thinness of $Q$.

\underline{Cases $(1.4)\wedge (2.5) $, $(1.4)\wedge (2.6) $, $(1.4)\wedge (2.7) $}  are
impossible by  Lemma \ref{lem: unione di lemmi}, respectively case $(D)$, case $(B)$, case $(C)$.

\underline{Assume $(1.5)\wedge (2.6)$};
then by swapping $b_1$ with $b_2$ one gets back to the case
$(1.4)\wedge(2.6)$.

\underline{Suppose $(1.5)\wedge (2.7) $}; 
then by swapping $b_1$ with $b_2$ one gets back to the case
$(1.4)\wedge(2.7)$.

\underline{Assume $(1.6)\wedge (2.7) $};
then, by transitivity of $Q$, one gets  $(a_1 , a_3 \, |\, b_1 , b_2 \, |\,
a_2 )$, and by Definition \ref{defi:quartet S coming from quintet Q}
one can deduce $(a_1 , a_3 \, |\, b_1 , b_2 ) $.
\end{proof}

\begin{prop}\label{prop: quartets induced by quintets are saturated}
 A quartet system  $S$ associated to a TTS quintet system $Q$
 as in  Definition \ref{defi:quartet S coming from quintet Q} is saturated. 
\end{prop}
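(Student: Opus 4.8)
The plan is to reduce the statement to Proposition \ref{esistequalsiasi} together with a direct reading of Definition \ref{defi:quartet S coming from quintet Q}. By definition of saturation for quartet systems, I must show that for all $a_1,a_2,b_1,b_2,x\in X$ one has $(a_1,a_2\,|\,b_1,b_2)\Rightarrow (a_1,x\,|\,b_1,b_2)\vee(a_1,a_2\,|\,b_1,x)$. So I would fix the hypothesis $(a_1,a_2\,|\,b_1,b_2)\in S$ and an arbitrary $x\in X$, and aim to produce one of these two target quartets.

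The key observation is that the seven defining instances $(i)$--$(vii)$ of Definition \ref{defi:quartet S coming from quintet Q} for the quartet $(a_1,a_2\,|\,b_1,b_2)$ are exactly the seven quintets $(1)$--$(7)$ listed in Proposition \ref{esistequalsiasi}. Since $(a_1,a_2\,|\,b_1,b_2)\in S$ means, by that definition, that one such quintet holds for \emph{some} element, Proposition \ref{esistequalsiasi} upgrades this to: for \emph{every} element, and in particular for the chosen $x$, at least one of $(1)$--$(7)$ holds with $x$ in place of the free variable. Thus I would split into these seven cases.

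Then I would check that each of the seven quintets, reread through Definition \ref{defi:quartet S coming from quintet Q}, directly certifies one of the two target quartets, using $x$ as the fourth leaf and either $b_2$ or $a_2$ as the auxiliary witness $y$. Concretely I expect: case $(1)$ yields $(a_1,a_2\,|\,b_1,x)$ via instance $(iii)$ with $y=b_2$; case $(2)$ via instance $(ii)$; cases $(4)$ and $(5)$ via instances $(i)$ and $(iv)$; while case $(3)$ yields $(a_1,x\,|\,b_1,b_2)$ via instance $(vii)$ with $y=a_2$, and cases $(6)$, $(7)$ via instances $(i)$ and $(v)$. So cases $(1),(2),(4),(5)$ land on $(a_1,a_2\,|\,b_1,x)$ and cases $(3),(6),(7)$ land on $(a_1,x\,|\,b_1,b_2)$, which exhausts the required disjunction.

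Unlike the proofs of thinness and transitivity, I do not expect to need Lemma \ref{lem: unione di lemmi}, Lemma \ref{lem: osservazione nuova}, or even the transitivity and thinness of $Q$ here: once Proposition \ref{esistequalsiasi} is available, saturation is pure bookkeeping. The only real task, and hence the main obstacle, is to match each of the seven quintets with the correct defining instance of $S$ for the correct target quartet, keeping careful track of the roles of the two cherries and of the isolated leaf under the relabellings; an error in which of $b_2$ or $a_2$ plays the witness $y$, or in the cherry/isolated-leaf assignment, would break a case. I would therefore carry out the seven verifications carefully but mechanically.
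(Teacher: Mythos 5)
Your proof is correct, but it takes a genuinely different --- and in fact tighter --- route than the paper's. The paper does not invoke Proposition \ref{esistequalsiasi} in its saturation proof: it keeps the existential witness $z$ supplied by Definition \ref{defi:quartet S coming from quintet Q}, applies the saturation axiom of $Q$ to the quintet containing $z$ so as to bring $x$ into play (splitting each of the seven cases into three sub-cases), and then reads off a target quartet from the definition of $S$, in one sub-case also invoking the transitivity of $S$ established in Proposition \ref{prop: quartets induced by quintets are transitive}; moreover, the paper writes out only one of the seven cases explicitly, ``as a sample''. Your argument instead uses Proposition \ref{esistequalsiasi} to replace the existential witness by the fixed $x$ itself, after observing (correctly) that the seven defining instances of $(a_1,a_2\,|\,b_1,b_2)\in S$ coincide, as partitions, with the seven quintets of that proposition; then each of the seven quintets with $x$ as fifth element is literally a defining instance of one of the two target quartets, with witness $y=b_2$ in cases $(1),(2),(4),(5)$ and $y=a_2$ in cases $(3),(6),(7)$. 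I checked all seven matchings --- $(1)$ via $(iii)$, $(2)$ via $(ii)$, $(4)$ via $(i)$, $(5)$ via $(iv)$, $(3)$ via $(vii)$, $(6)$ via $(i)$, $(7)$ via $(v)$ --- and they are right. What your route buys: it is complete rather than sampled, each case is a one-line verification with no sub-case analysis, and it removes the dependence of saturation on the transitivity of $S$, since all the strength of the TTS hypotheses on $Q$ is packaged once and for all in Proposition \ref{esistequalsiasi}. It is also more consistent with the paper's own strategy for thinness and transitivity, both of which do use the exists-to-forall upgrade of Proposition \ref{esistequalsiasi}; the paper's saturation proof pays for avoiding it with extra applications of the saturation of $Q$ and a cross-dependency between propositions.
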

\begin{proof}
 Suppose that $(a_1 , a_2 \, |\, b_1  , b_2) $ and fix $x\in X$. 
 By Definition \ref{defi:quartet S coming from quintet Q}
 there exists  $z \in X$ such that at least one of the following holds:
 
  $(1)\; (z\, |\,a_1 , a_2 \, |\, b_1 , b_2 )$,
  
  $(2)\;(a_1  , a_2 \, |\, b_1 ,  b_2 , z)$,
  
  $(3)\;(a_1 , a_2 \, |\, b_1 , z \, |\, b_2 )$,
  
  $(4)\;(a_1 , a_2 \, |\, b_2 , z \, |\, b_1 )$,
  
  $(5)\;( b_1 , b_2 \, |\, a_1 , z \, |\, a_2)$,
  
  $(6)\;( b_1 , b_2 \, |\, a_2 , z \, |\, a_1)$,
  
  $(7)\;( b_1 , b_2 \, |\,a_1 , a_2  , z )$. 
 
The argument consists in showing that any of the  items above implies either $(a_1 , a_2
\, |\, b_1 , x)$ or  $(a_1 , x\, |\, b_1 , b_2)$; since it is repetitive, and uses essentially only Definition
\ref{defi:quartet S coming from quintet Q}, we only give a sample of the whole
argument. 

Suppose that $(4)$ holds; then, as $Q$ is saturated, we have:

 $ (a_1 , a_2 \, |\, b_2 , z \, |\, x)\;(4.1)\quad \vee \quad (a_1 , x \, |\,
b_2  , z \, |\, b_1)\;(4.2) \quad \vee \quad (a_1 , a_2 \, |\, b_2 , x \, |\,
b_1)\;(4.3) $. 

By $(iv)$ of Definition \ref{defi:quartet S coming
from quintet Q}
with $a=a_1, \, b=a_2 ,\, c=x,\, d=b_2 ,\, y=z $, condition $(4.1)$  entails  $(a_1 , a_2 \,
|\, x , b_2) $; since
also $(a_1 , a_2 \, |\, b_1 , b_2) \in S$, the hypothesis that $S$ is transitive
allows us to get $(a_1 , a_2 \, |\, b_1 , x)$.

By $(iv)$ of Definition \ref{defi:quartet S coming from quintet Q}
with $a=a_1, \, b=x ,\, c=b_1,\, d=b_2 , \, y=z $, case $(4.2)$  entails $(a_1 , x \,|\, b_1 ,b_2)$.

By $(iv)$ of Definition \ref{defi:quartet S coming
from quintet Q}
with $a=a_1, \, b=a_2 ,\, c=b_1,\, d=x ,\, y=b_2 $, case $(4.3)$  entails $(a_1 , a_2 \, |\, b_1 , x)$ as wanted.
\end{proof}

\begin{rem} \label{Q'S} 
Let $S$ be the quartet system of a phylogenetic $X$-tree $(T, \varphi)$ 
 and let $Q'$ be the quintet system in $X$ associated to $S$. Then $Q'$ is the
quintet system of $(T, \varphi)$. 
\end{rem}

\begin{rem}  \label{abcdinS} Let 
$Q$ be a TTS quintet system in $X$;
call $S$ the quartet system associated to $Q$. We have that  $(a,b,c,d) \in S$
if and only if
\begin{equation} \label{ma} (a,b,c,d,x) \in Q \vee (a,x \,| \, b, c,d) \in Q 
\vee (b,x \,| \, a, c,d) \in Q 
 \vee (c,x \,| \, a,b,d) \in Q  \vee (d,x \,| \, a, b, c) \in Q  \end{equation}
  for any $x \in X$. 
 By Proposition \ref{esistequalsiasi} this holds if and only if 
there exists  $x \in X$ such  that (\ref{ma}) holds.
\end{rem}

\begin{prop} \label{Q'=Q}
Let $Q$ be a TTS quintet system in $X$;
call $S$ the quartet system associated to $Q$ and
$Q'$ the quintet system associated to $S$. Then $Q=Q'$.
\end{prop}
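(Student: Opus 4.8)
The plan is to exploit the tree that the quartet system $S$ produces, rather than to verify $Q=Q'$ directly from the combinatorial definitions. By Propositions \ref{prop: quartets induced by quintets are thin}, \ref{prop: quartets induced by quintets are transitive} and \ref{prop: quartets induced by quintets are saturated}, the quartet system $S$ associated to $Q$ is TTS, so Theorem \ref{CS} yields a phylogenetic $X$-tree $(T,\varphi)$ whose quartet system is exactly $S$. By Remark \ref{Q'S}, the quintet system associated to $S$, namely $Q'$, coincides with the quintet system of $(T,\varphi)$. In particular $Q'$ is the quintet system of a phylogenetic tree, so for every $5$-subset $Y$ of $X$ there is exactly one quintet on $Y$ in $Q'$, the one read off $T|_Y$. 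Since $Q$ is thin, it too has exactly one quintet on each $Y$. Thus it suffices to prove the single inclusion $Q\subseteq Q'$: if the unique quintet of $Q$ on each $Y$ lies in $Q'$, then by the uniqueness of the quintet of $Q'$ on $Y$ it must be that same quintet, forcing $Q=Q'$.

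To prove $Q\subseteq Q'$ I would argue type by type, using only Definition \ref{defi:quartet S coming from quintet Q}, the definition of the quintet system associated to a quartet system, and Remark \ref{abcdinS}; in each case a single explicit witness does the job. If $(a,b\,|\,c,d\,|\,e)\in Q$, then choosing $y=e$ in instance $(i)$, $y=d$ in instance $(iii)$, and $y=b$ in instance $(vi)$ of Definition \ref{defi:quartet S coming from quintet Q} gives, respectively, $(a,b\,|\,c,d)$, $(a,b\,|\,c,e)$ and $(a,e\,|\,c,d)$ in $S$, which is precisely the condition for $(a,b\,|\,c,d\,|\,e)\in Q'$. If $(a,b\,|\,c,d,e)\in Q$, then instance $(ii)$ with $y=e$ and with $y=d$ gives $(a,b\,|\,c,d),(a,b\,|\,c,e)\in S$, while Remark \ref{abcdinS} applied with $x=b$ and with $x=a$ gives the star quartets $(a,e,c,d),(b,c,d,e)\in S$, matching the defining condition for $(a,b\,|\,c,d,e)\in Q'$. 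Finally, if $(a_1,a_2,a_3,a_4,a_5)\in Q$ is a star, then for each $i$ Remark \ref{abcdinS} with $x=a_i$ shows that the $4$-subset $\{a_1,\dots,a_5\}\setminus\{a_i\}$ is a star quartet in $S$, which is exactly the condition for $(a_1,a_2,a_3,a_4,a_5)\in Q'$.

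I expect the only genuine obstacle to be bookkeeping: checking that, in each instance above, the chosen witness really turns the relevant disjunct of Definition \ref{defi:quartet S coming from quintet Q} or of Remark \ref{abcdinS} into a quintet that belongs to $Q$, while keeping track of the unordered nature of the cherries, so that, for example, $(a,b\,|\,c,e,d)$ and $(a,b\,|\,c,d,e)$ are the same element of $Q$, and $(c,d\,|\,a,b\,|\,e)$ coincides with $(a,b\,|\,c,d\,|\,e)$. No transitivity, saturation, or thinness of $Q$ beyond what is already packaged into Remark \ref{abcdinS} is needed for this inclusion, since the hard combinatorial work has been absorbed into the construction of the tree $(T,\varphi)$ and into the earlier propositions. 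This is exactly why reducing to the single inclusion $Q\subseteq Q'$ via the uniqueness of quintets in $Q'$ is advantageous.
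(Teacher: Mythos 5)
Your proof is correct, but it takes a genuinely different route from the paper's. The paper proves both inclusions purely combinatorially, never invoking the tree: its harder half shows that every quintet of kind $(2,2,1)$ or $(3,2)$ in $Q'$ lies in $Q$, by assuming the contrary, using thinness of $Q$ to produce a competing quintet on the same $5$-set, and deriving in each case a quartet of $S$ that contradicts the thinness of $S$; its easier half is exactly your inclusion $Q\subseteq Q'$ (your explicit witnesses $y=e$, $y=d$, $y=b$, etc.\ are just a more detailed writing of what the paper dispatches with ``this follows from the definition of $S$''). You instead eliminate the hard half entirely: by citing Propositions \ref{prop: quartets induced by quintets are thin}, \ref{prop: quartets induced by quintets are transitive}, \ref{prop: quartets induced by quintets are saturated}, Theorem \ref{CS} and Remark \ref{Q'S} (all of which precede the proposition, so there is no circularity), you realize $Q'$ as the quintet system of a tree, hence with exactly one quintet per $5$-subset, and then uniqueness on both sides upgrades the single inclusion $Q\subseteq Q'$ to equality. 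What each approach buys: the paper's version keeps the proposition self-contained and independent of the Colonius--Schultze theorem, so the tree enters only once, in the final theorem; yours is shorter, replaces the longest case analysis by a structural uniqueness argument, and has the side benefit of explicitly treating the star quintets of kind $(5)$, a case the paper's two bullets silently omit (it is recoverable there, but only by an extra thinness argument of precisely the sort you make). The one point you should make explicit is the reading of thinness of $Q$ as ``exactly one quintet on each $5$-subset'' (existence, not just uniqueness); your reduction needs the existence half, though the paper's own proof relies on the same reading.
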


\begin{proof}
$\bullet $ First we prove that every partition of  $Q'$ of kind $(2,2,1)$    or
of kind $(2,3)$  is also an element of  $ Q$.

Let $(a,b \, |\, c,d \,|\, e) \in Q'$. Suppose that  $(a,b \, |\, c,d \,|\, e)
\not\in Q$.
Thus one of the following conditions holds:

1) $(a,b,c,d,e) \in Q$; by the definition of $S$, this would imply $(a,b,c,d) 
\in S $, which is absurd since, by the definition of $Q'$, 
we have that $(a,b \, |\, c,d ) \in S$ (and $S$ is thin).

2) $(x,y\,| \, z, w,u) \in Q$ for $\{x,y,z,w,u\} =\{a,b,c,d,e\}$; by the
definition of $S$, this would imply $(x,z ,w,u) \in S$, which is absurd since,
by the definition of $Q'$, 
we have that a partition of kind $(2,2) $ of   $\{x,z,w,u\}$  is in $S$ (and $S$
is thin).

3) A partition of kind $(2,2,1) $
 of   $\{a,b,c,d,e\}$, different from $(a,b \, |\, c,d \,|\, e) $,  is in $Q$;
up to swapping $a$ with $b$ or $c$ with $d$ or $\{a,b\}$ with $\{c,d\}$, we can
suppose  $(a,c\,| \, b, d \,| \, e) \in Q$ or $(a,e\,| \, c, d \,| \, b) \in Q$
or $(a,e\,| \, b, d \,| \, c) \in Q$.
    
By the definition of $S$, $(a,c\,| \, b, d \,| \, e) \in Q$ would imply $(a,c\,
|\, b,d) \in S$, which is absurd since, by the definition of $Q'$, 
we have that $(a,b \, |\, c,d ) \in S$ (and $S$ is thin).

By the definition of $S$, $(a,e\,| \, c, d \,| \, b) \in Q$ would imply $(a,e\,
|\, b,c) \in S$, which is absurd since, by the definition of $Q'$, 
we have that $(a,b \, |\, c,e ) \in S$ (and $S$ is thin).

By the definition of $S$, $(a,e\,| \, b, d \,| \, c) \in Q$ would imply $(a,c\,
|\, b,d) \in S$, which is absurd since, by the definition of $Q'$, 
we have that $(a,b \, |\, c,d ) \in S$ (and $S$ is thin).

Let $(a,b \, |\, c,d , e) \in Q'$. Suppose that  $(a,b \, |\, c,d , e) \not\in
Q$.
Thus one of the following conditions holds:

1) $(a,b,c,d,e) \in Q$; by Remark \ref{abcdinS}, this would imply $(a,b,c,d) 
\in S $, which is absurd since, by the definition of $Q'$, 
we have that $(a,b \, |\, c,d ) \in S$ (and $S$ is thin).

2) A partition of kind $(2,3) $
 of   $\{a,b,c,d,e\}$, different from $(a,b \, |\, c,d , e) $,  is in $Q$; up to
making a permutation of   $\{a,b\}$ or of $\{c,d,e\}$, we can suppose  $(a,c\,|
\, b, d , e) 
 \in Q$ or $(c,d\,| \, a , b, e) \in Q$.
  
  The condition $(a,c\,| \, b, d , e) \in Q$ would imply $(a,c \, |\, b,d) 
   \in S$, which is absurd since, by the definition of $Q'$, we have that 
    $(a,b \, |\, c,d) \in S$ 
 
  The condition $(c,d\,| \, a , b, e)  \in Q$ would imply $(c,d \, |\, b,e) 
   \in S$, which is absurd since, by the definition of $Q'$, we have that 
    $(b ,c,d,e) \in S$.

3) A partition of kind $(2,2,1) $
 of   $\{a,b,c,d,e\}$  is in $Q$; 
 up to making a permutation of   $\{a,b\}$ or of $\{c,d,e\}$, we can suppose
 $(a,b\,| \, c, d \,| \, e) \in Q$ or $(a,c\,| \,  d ,e \,| \, b) \in Q$ or
$(a,c\,| \, b, d \,| \, e) \in Q$ or $(c,d\,| \, b, e \,| \, a) \in Q$.
 
  The condition   $(a,b\,| \, c, d \,| \, e) \in Q$   would imply $(b,e \, |\,
c,d) 
   \in S$, which is absurd since, by the definition of $Q'$, we have that 
    $(b , c,d,e) \in S$. 
    
  The condition   $(a,c\,| \,  d ,e \,| \, b) \in Q$   would imply $(a,c \, |\,
b,d) 
   \in S$, which is absurd since, by the definition of $Q'$, we have that 
    $(a,b \,|\, c,d) \in S$. 
    
  The condition   $(a,c\,| \, b, d  \,| \, e) \in Q$   would imply $(a,c \, |\,
b,d) 
   \in S$, which is absurd since, by the definition of $Q'$, we have that 
    $(a,b \,|\, c,d) \in S$. 
    
  The condition   $(c,d\,| \, b, e \,| \, a) \in Q$  would imply $(c,d \, |\,
b,e) 
   \in S$, which is absurd since, by the definition of $Q'$, we have that 
    $(c,d,b,e) \in S$.

$\bullet $ Let us  prove  that every partition of  $Q$ of kind $(2,2,1)$ or of
kind $(2,3)$     is also an element of  $ Q'$.

Let $(a,b \, |\, c,d \,|\, e) \in Q$. 
By the definition of $Q'$, we have  that   $(a,b \, |\, c,d \,|\, e) \in Q'$ if
and only if  $(a,b \, |\, c,d ) \in S \wedge (a,b \, |\, c,e) \in S \wedge (a,e
\, |\, c,d ) \in S  $ 
and this follows from the fact that $(a,b \, |\, c,d \,|\, e) \in Q$ and the
definition
of $S$.

Let $(a,b \, |\, c,d , e) \in Q$. 
By the definition of $Q'$, we have  that   $(a,b \, |\, c,d , e) \in Q'$ if and
only if  $(a,b \, |\, c,d ) \in S \wedge (a,b \, |\, c,e) \in S \wedge (a,e ,
c,d ) \in S 
\wedge (b , c,d ,e) \in S  $ 
and this follows from the fact that $(a,b \, |\, c,d , e) \in Q$ and the
definition
of $S$.
\end{proof}

\begin{thm} Let $Q$ be a quintet system in $X$; we have that  $Q$ is the quintet
system of a phylogenetic $X$-tree
if and only if $Q$ is TTS.
\end{thm}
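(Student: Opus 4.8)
The plan is to prove the two implications separately. The backward implication (TTS $\Rightarrow$ tree) is where the machinery built up in the excerpt does all the work, so it reduces to an assembly of the earlier results; the forward implication (tree $\Rightarrow$ TTS) requires a direct, and largely routine, inspection of the tree.

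For the backward implication, suppose $Q$ is TTS. I would first form the associated quartet system $S$ of Definition \ref{defi:quartet S coming from quintet Q}. Propositions \ref{prop: quartets induced by quintets are thin}, \ref{prop: quartets induced by quintets are transitive} and \ref{prop: quartets induced by quintets are saturated} show that $S$ is, respectively, thin, transitive and saturated, hence TTS. By the Colonius--Schultze criterion (Theorem \ref{CS}), there is a phylogenetic $X$-tree $(T,\varphi)$ whose quartet system is exactly $S$. Letting $Q'$ denote the quintet system associated to $S$, Remark \ref{Q'S} tells us that, because $S$ is the quartet system of $(T,\varphi)$, this $Q'$ is precisely the quintet system of $(T,\varphi)$. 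Finally, Proposition \ref{Q'=Q} gives $Q=Q'$. Hence $Q$ is the quintet system of $(T,\varphi)$, which proves this direction.

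For the forward implication, suppose $Q=Q(T,\varphi)$ for some phylogenetic $X$-tree, and let $S_T$ be the quartet system of $(T,\varphi)$, which is thin, transitive and saturated by Theorem \ref{CS}. I would verify the three TTS properties of $Q$ directly. Thinness splits into two parts: that each $5$-subset $Y$ carries a unique quintet is immediate, since the topology of $T|_Y$ determines exactly one partition of $Y$ into complete cherries together with the complementary leaves; and the four impossibility clauses (i)--(iv) each reduce cleanly to quartet thinness, because the two quintets they pair up would induce two \emph{distinct} quartets on a common $4$-subset of $T$, contradicting the thinness of $S_T$. For instance $(a,b\,|\,c,x\,|\,d)$ forces the quartet $(a,b\,|\,c,d)$ on $\{a,b,c,d\}$, whereas $(a,c\,|\,b,y\,|\,d)$ forces $(a,c\,|\,b,d)$, and these cannot coexist in $S_T$. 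For saturation and transitivity, every implication involves at most six leaves, so the verification is local: restricting $T$ to those leaves, I would check in each case that, once the hypothesis holds in $T$, the position of the extra leaf relative to the cherry structure of $T$ forces one of the disjuncts in the conclusion.

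I expect the main obstacle to be the bookkeeping in the forward direction: although every individual check is elementary, the definition of a TTS quintet system carries many clauses (three saturation implications, six transitivity implications and four thinness impossibilities), so organizing the finitely many relevant tree topologies so that no case is overlooked is where care is needed. A cleaner alternative worth attempting is to avoid the tree entirely in the forward direction: by Remark \ref{Q'S} the quintet system $Q$ of $(T,\varphi)$ coincides with the quintet system associated to the quartet system $S_T$, which is TTS; it would then suffice to prove, once and for all, the purely combinatorial statement that the quintet system associated to \emph{any} TTS quartet system is again TTS. This would replace the topological case analysis with direct manipulation of quartets and would package both directions around the same quartet--quintet correspondence already used for the backward implication.
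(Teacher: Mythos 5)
Your proposal is correct and follows essentially the same route as the paper: the backward implication is assembled exactly as the paper does it (Propositions \ref{prop: quartets induced by quintets are thin}, \ref{prop: quartets induced by quintets are transitive}, \ref{prop: quartets induced by quintets are saturated} give that $S$ is TTS, Theorem \ref{CS} produces the tree, and Remark \ref{Q'S} together with Proposition \ref{Q'=Q} identifies $Q$ with the tree's quintet system), while the forward implication, which the paper dismisses as ``very easy to prove,'' is handled by the same direct local inspection of restricted subtrees that you sketch.
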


\begin{proof}
$\Rightarrow$ Very easy to prove.

$\Leftarrow$  
By Propositions \ref{prop: quartets induced by quintets are thin}, \ref{prop:
quartets induced by quintets are saturated} and \ref{prop: quartets induced by
quintets are transitive}, the quartet system $S$ associated to $Q$ is TTS. So,
by Theorem \ref{CS},
there exists an $X$-tree $(T, \varphi)$  whose quartet system is $S$. Let $Q'$
be the quintet system associated to $S$.  By Remark \ref{Q'S}, we have 
that $Q'$ is the quintet system associated to $(T, \varphi)$. By Proposition
\ref{Q'=Q},   we have that $Q=Q'$, hence $Q$  
 is the quintet system associated to $(T, \varphi)$.
\end{proof}

{\small }

{\bf Address of both authors:}
Dipartimento di Matematica e Informatica ``U. Dini'',

viale Morgagni 67/A,
50134  Firenze, Italia

{\bf
E-mail addresses:}
scalamai@math.unifi.it, rubei@math.unifi.it

\end{document}